\def\to{\longrightarrow}
\def\n{\nabla}
\def\L{\bigtriangleup}
\def\<{\langle}
\def\>{\rangle}
\def \D{{\mathcal{D}}}
\def \H{{\mathcal{H}}}
\newtheorem{The}{Theorem}[section]
\newtheorem{Pro}[The]{Proposition}
\newtheorem{Lem}[The]{Lemma}
\theoremstyle{definition}
\newtheorem{Rem}[The]{Remark}
\thanks{2010 Mathematical Subject Classification.
 58D05, 53D35, 35Q35, 47H99, 53C99.}
\begin{document}
\title{Sobolev $H^1$ Geometry of the Symplectomorphism Group}
\author{J. Benn and A. Suri }
\address{School of Engineering and Advanced Technology, Massey University,\\
Palmerston North, New Zealand.}
\email{j.benn1@massey.ac.nz}
\address{Department  of Mathematics, Faculty of sciences \\
Bu-Ali Sina University, Hamedan 65178, Iran.}
\email{ali.suri@gmail.com \& a.suri@basu.ac.ir}
\maketitle {\hspace{2.5cm}}

\begin{abstract}
For a closed  symplectic manifold $(M,\omega)$ with compatible Riemannian metric $g$ we study the Sobolev $H^1$ geometry of the group of all $H^s$ diffeomorphisms on $M$ which preserve the symplectic structure. We show that, for sufficiently large $s$, the $H^1$ metric admits globally defined geodesics and the corresponding exponential map is a non-linear Fredholm map of index zero. Finally, we show that the $H^1$ metric carries conjugate points via some simple examples.

\textbf{Keywords}: Symplectic manifold; Geodesic; Spray; Variation; Hilbert manifold; Fredholm operator; Sobolev metric; Conjugate point; Einstein manifold.
\end{abstract}

\pagestyle{headings} \markright{Symplectomorphisms  with $H^1$ metric}
\tableofcontents

%
%
%
%
\section{Introduction}
A number of well-known nonlinear partial differential equations of mathematical physics arise as geodesic equations on various infinite dimensional Lie groups. The most fundamental example is the Euler equations of ideal hydrodynamics where Arnold \cite{Arnold66} noticed that a curve $\eta(t)$ in the group of smooth volume preserving diffeomorphisms is a geodesic of the right-invariant $L^2$ metric if and only if the velocity field $v(t)=\dot{\eta(t)}\circ \eta^{-1}(t)$ solves the Euler equations of hydrodynamics. Subsequently, Ebin and Marsden \cite{Ebin-Marsden} showed that the volume preserving diffeomorphism group can be given the structure of a Hilbert manifold and proved that the corresponding geodesic equation is actually a smooth ordinary differential equation, hence rigourously justifying the infinite dimensional geometric constructions. Since then, much research has focused on finding geometric formulations for other conservative systems in continuum mechanics. For examples of these see Arnold-Khesin \cite{Arnold98}, Marsden and Ratiu \cite{MarsdenRatiu}, Schmid \cite{Schmid}, Taylor \cite{Taylor-Ev}, Vizman \cite{Vizman}, Zeitlin and Kambe \cite{ZK}, or Khesin-Wendt \cite{KhesinWendt} and the references therein.

Two motivations for the present work are the Lagrangian-averaged Euler equations as considered by Shkoller \cite{Shkoller98} and the symplectic diffeomorphism group equipped with the $L^2$ metric as considered by Ebin \cite{Ebin}.

The Lagrangian-averaged Euler equations were first introduced by Holm, Marsden, and Ratiu in Euclidean space, and describe the mean hydrodynamic motion of an incompressible fluid. Shkoller \cite{Shkoller98} realized the Lagrangian-averaged Euler equations as the Euler-Poincare equations associated with the geodesic flow of the right-invariant Sobolev $H^1$ metric on the group of volume-preserving diffeomorphisms of a compact n-dimensional Riemannian manifold. He showed that, for sufficiently large $s$, the geodesic spray is continuously differentiable so that a standard Picard iteration argument proves existence and uniqueness of solutions on a finite time interval. Kouranbaeva and Oliver \cite{KO} have subsequently shown that when the underlying manifold is two dimensional the Lagrangian-averaged Euler equations admit weak global solutions.

In the present work we consider a closed symplectic   manifold, with symplectic form $\omega$, and the group of Sobolev $H^s$ symplectic diffeomorphisms $\mathcal{D}^{s}_{\omega}(M)$ of $M$ equipped with the right-invariant Sobolev $H^1$ metric on vector fields. Our main theorem is
\begin{The}
Let $(M,\omega)$ be a closed, $2n$-dimensional symplectic manifold with compatible Riemannian metric $g$. Then, for $s>n+3$, the group $\mathcal{D}^{s}_{\omega}(M)$ is $H^1$ geodesically complete.
\end{The}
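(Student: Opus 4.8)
The plan is to combine the smooth-spray technique of Shkoller \cite{Shkoller98} with energy conservation and a top-order a priori estimate, following the broad strategy of Ebin--Marsden \cite{Ebin-Marsden}. First I would exploit right-invariance: since right translation $R_\eta$ is a smooth isometry of the $H^1$ metric, it carries geodesics to geodesics, so it suffices to prove that every geodesic emanating from the identity $e=\mathrm{id}$ is defined for all time. Next, following Shkoller's argument in the volume-preserving case, I would show that the geodesic spray of the right-invariant $H^1$ metric is a smooth vector field on $T\mathcal{D}^s_\omega(M)$; the point is that writing the covariant geodesic equation in Lagrangian coordinates and conjugating by $\eta$ converts the apparent loss of derivatives into composition operators and $\eta$-dependent coefficients that map $H^s$ to $H^s$ for $s>n+3$. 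Smoothness of the spray then yields, via Picard iteration on the Hilbert manifold $T\mathcal{D}^s_\omega(M)$, local existence and uniqueness of geodesics together with smooth dependence on initial data, and hence an open maximal interval of existence $[0,T)$.

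The second step is the reduction and conservation law. Writing $v(t)=\dot\eta(t)\circ\eta(t)^{-1}$ for the Eulerian velocity, a symplectic vector field, the geodesic equation becomes the corresponding Euler--Arnold equation, and right-invariance forces the energy $E(t)=\tfrac12\langle v(t),v(t)\rangle_{H^1}$ to be constant, so that $\|v(t)\|_{H^1}$ is conserved. Here the symplectic hypothesis already pays off: every $\eta\in\mathcal{D}^s_\omega(M)$ preserves the Liouville volume $\omega^n$, hence $\eta(t)$ is automatically nondegenerate with unit Jacobian throughout its life. Consequently the only mechanism by which the maximal time $T$ could be finite is a blow-up of the top Sobolev norm, and I would reduce geodesic completeness to the single a priori claim that on every finite subinterval of $[0,T)$ the norm $\|v(t)\|_{H^{s-1}}$ (equivalently $\|\eta(t)\|_{H^s}$) stays bounded. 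Since $s-1>n+2$ gives $H^{s-1}\hookrightarrow C^1$, such a bound keeps $v(t)$ of class $C^1$ and $\eta(t)$ a genuine diffeomorphism, so the standard continuation criterion for the smooth spray extends the geodesic past $T$, contradicting maximality and forcing $T=\infty$.

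To establish the a priori bound I would run a top-order energy estimate on the Euler--Arnold equation. Applying $\Lambda^{s-1}=(I-\Delta)^{(s-1)/2}$ to the equation, pairing with $\Lambda^{s-1}v$ in $L^2$, and integrating by parts over the closed manifold $M$, the transport term produces a commutator $[\Lambda^{s-1},v\cdot\nabla]v$ that I would control by Kato--Ponce-type commutator and product estimates; the two orders of smoothing carried by the inverse inertia operator $A^{-1}=(I-\Delta)^{-1}$ associated to the $H^1$ metric are what render the remaining terms subcritical. The target is the differential inequality $\tfrac{d}{dt}\|v\|_{H^{s-1}}^2\le C\bigl(\|v\|_{H^1}\bigr)\,\|v\|_{H^{s-1}}^2$ with a constant depending only on the conserved low norm, after which Gronwall's inequality closes the estimate on any finite interval.

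The hard part will be exactly this last inequality, and specifically the appearance of $\|\nabla v\|_{L^\infty}$ in the commutator bound: because $\dim M=2n\ge2$, the conserved quantity $\|v\|_{H^1}$ does not control $\|\nabla v\|_{L^\infty}$ through any Sobolev embedding, which is precisely the obstruction leaving high-dimensional Euler-type global existence open. I therefore expect the essential work to lie in extracting, from the symplectic structure together with the smoothing of $A^{-1}$, an additional transported or Casimir-type invariant---a symplectic analogue of vorticity---whose boundedness furnishes a Beale--Kato--Majda-style continuation criterion and lets the top-order estimate close independently of a naive embedding. Isolating and exploiting this invariant is the crux on which geodesic completeness rests.
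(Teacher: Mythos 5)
Your outline reproduces the paper's opening moves faithfully: right-invariance reduces to geodesics through the identity, Shkoller-style conjugation shows the spray is smooth on $T\mathcal{D}^s_\omega$ for $s>n+3$, and $H^1$ energy conservation plus a continuation criterion reduce everything to a finite-time bound on the top Sobolev norm of $v$. But the proposal stops exactly where the proof has to begin: you end by conjecturing that some transported, Casimir-type invariant---a symplectic analogue of vorticity---should exist and supply a Beale--Kato--Majda-style criterion, and you explicitly defer its construction. That deferred step is the theorem; without it nothing in your scheme closes, as you yourself observe ($\|v\|_{H^1}$ controls neither $\|\nabla v\|_{L^\infty}$ nor anything else of top order when $2n\geq 2$). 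The paper constructs the invariant explicitly. Writing $v(t)=J\nabla f(t)+h(t)$, multiplying the Euler--Arnold equation by $\mathrm{Ad}^*_{\eta(t)}$ gives the conservation law $\mathrm{Ad}^*_{\eta(t)}v(t)=v_0$; testing against all $w\in T_e\mathcal{D}^s_\omega$ and invoking the Hodge decomposition together with the identity $\delta\alpha=\ast\, d\, i_{g^{\sharp}\alpha}\omega^{n}$ yields the pointwise transport identity
\begin{equation*}
\triangle(1+\triangle)f_{0}=\bigl(\triangle(1+\triangle)f(t)\bigr)\circ\eta(t),
\end{equation*}
so that $\|\triangle(1+\triangle)f(t)\|_{C^0}$ is conserved. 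This is precisely the missing quantity.

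With it in hand, the closure is also structurally different from (and more elementary than) your Kato--Ponce plan. By the explicit coadjoint formula $\mathrm{ad}_v^*v=(1+\Delta)^{-1}P\bigl((1+\Delta)\Delta f\cdot Jv\bigr)$ (the paper's Lemma 4.1), the right-hand side of the Euler--Arnold equation is multiplication of $v$ by a scalar followed by smoothing---there is no genuine transport term at top order---so one gets directly $\partial_t\|v\|_{H^s}^2\lesssim\|f\|_{C^4}\|v\|_{H^s}^2$ with no commutator estimates; the quantity to control is the $C^4$ norm of the stream function, strictly weaker than $\|\nabla v\|_{L^\infty}$. The $C^0$ bound on the invariant is then upgraded by the classical two-dimensional Euler mechanism of Kato: a sup bound on $\triangle$ of a potential makes the velocity quasi-Lipschitz (Lemma 4.2), quasi-Lipschitz fields have H\"older-continuous flows (Lemma 4.3), and since $v_0\in H^s\subset C^{3,\alpha}$ (here $s>n+3$ enters) gives $\triangle(1+\triangle)f_0\in C^{0,\alpha}$, composing with the H\"older flow keeps $\triangle(1+\triangle)f(t)$ in $C^{0,\alpha'}$ uniformly; Schauder elliptic estimates then give $f(t)\in C^{4,\alpha'}$ on any finite interval, and Gronwall closes the $H^s$ bound. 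In short: your architecture and your diagnosis of the obstruction are correct, but as written the proposal establishes only local well-posedness plus a conditional continuation criterion, and the unconditional content---the transported invariant and the quasi-Lipschitz/H\"older/Schauder chain that exploits it---is absent.
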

We note that when $M$ is two dimensional the symplectomorphism group and volume-preserving diffeomorphism group coincide so that our results strengthen those of Kouranbaeva and Oliver aswell as generalize them to the $2n$-dimensional symplectic setting.

The symplectomorphism group equipped with the $L^2$ metric on vector fields has already been conidered by Ebin \cite{Ebin} who showed that the corresponding geodesic equation is globally well-posed. In this setting the subgroup of Hamiltonian diffeomorphisms play a role in plasma dynamics analogous to the role played by the volume-preserving diffeomorphism group in hydrodynamics. In particular, the $L^2$ geodesic equation is equivalent to the geodesic Vlasov equation (see Holm and Tronci \cite{HolmTronci} and the references therein) and we could consider the geodesic equation of the $H^1$ metric as the Lagrangian-averaged geodesic Vlasov equation.

Ebin \cite{Ebin} also asked if the $L^2$ exponential map is a non-linear Fredholm map of index zero which was answered in the affirmative in \cite{Benn4}. Here we use the general framework of Misiolek and Preston \cite{MP} for studying exponential maps of right-invariant Sobolev $H^r$ metrics and show

\begin{The}
Let $(M,\omega)$ be a closed, $2n$-dimensional symplectic manifold with compatible Riemannian metric $g$. Then, for $s>n+3$, the exponential map of the $H^1$ metric on $\mathcal{D}^{s}_{\omega}$ is a non-linear Fredholm map of index zero.
\end{The}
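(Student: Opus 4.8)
The plan is to invoke the general criterion of Misio\l ek and Preston \cite{MP}, which reduces the Fredholm property of the Riemannian exponential map of a right-invariant Sobolev metric to boundedness and smoothing estimates on the associated coadjoint operator. By the geodesic completeness of Theorem 1.1, the exponential map $\exp_e:T_e\D^s_\omega\to\D^s_\omega$ is globally defined as the time-one map of the smooth $H^1$ geodesic spray, and in particular it is a $C^1$ map between Hilbert manifolds; it therefore suffices to prove that its Fr\'echet derivative $d(\exp_e)_v$ is a linear Fredholm operator of index zero at each $v\in T_e\D^s_\omega$. Fixing such a $v$ and writing $\eta(t)=\exp_e(tv)$ for the corresponding geodesic, I would first identify $d(\exp_e)_v$ with the time-one solution operator of the Jacobi equation $\tfrac{D^2}{dt^2}J+R(J,\dot\eta)\dot\eta=0$ along $\eta$, where $R$ is the curvature of the $H^1$ metric.

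Following \cite{MP}, the next step is to translate this equation to the identity: setting $y(t)=J(t)\circ\eta(t)^{-1}$ and $u(t)=\dot\eta(t)\circ\eta(t)^{-1}$ converts the Jacobi equation into a linear ODE on the fixed Hilbert space $T_e\D^s_\omega$ whose coefficients are built from $u(t)$, the Lie bracket, and the coadjoint operator $\mathrm{ad}^*_u$ of the $H^1$ inner product. Since a compact perturbation of the identity is automatically Fredholm of index zero, it is enough to show that the trivialized operator $J\mapsto d(\exp_e)_v(J)\circ\eta(1)^{-1}$ differs from the identity on $T_e\D^s_\omega$ by a compact operator. Writing the Jacobi system in first-order form and applying Duhamel's formula expresses this solution operator as $\mathrm{Id}$ plus an integral operator whose kernel is assembled from $\mathrm{ad}^*_u$ and the inertia operator $A$ of the metric. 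For the $H^1$ metric $\langle X,Y\rangle_{H^1}=\int_M(\langle X,Y\rangle+\langle\nabla X,\nabla Y\rangle)\,d\mu$ the inertia operator is a projected version of $1-\Delta$, with $\Delta$ the Hodge Laplacian, so $A^{-1}$ gains two derivatives; together with the Rellich lemma this is the mechanism that forces the integral operator to be compact.

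The principal obstacle will be verifying the Misio\l ek--Preston hypotheses for this particular coadjoint operator, namely that $\mathrm{ad}^*_u$ is bounded on $H^s$ with no net loss of derivatives and that the two derivatives recovered by $A^{-1}$ are not silently spent elsewhere. The new feature relative to the volume-preserving case of \cite{Shkoller98,KO} and the $L^2$ symplectic case of \cite{Ebin,Benn4} is the projection $P_\omega$ onto symplectic vector fields that replaces the classical Leray projection: tangent vectors to $\D^s_\omega$ at the identity are fields $X$ with $d(\iota_X\omega)=0$, and $P_\omega$ is the corresponding projection supplied by the Hodge decomposition of the pair $(\omega,g)$. I would show that $P_\omega$ is a pseudodifferential operator of order zero, hence bounded on every $H^s$, and that it commutes with the elliptic operator $1-\Delta$ modulo lower-order terms, so that the bracket and curvature terms appearing in $\mathrm{ad}^*_u$ retain the smoothing provided by $A^{-1}$. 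Combined with the smoothness of the spray already used in Theorem 1.1, these estimates make the integral operator compact, so $d(\exp_e)_v=\mathrm{Id}+(\text{compact})$ is Fredholm of index zero; since this holds at every $v$ and $\exp_e$ is $C^1$, the map $\exp_e$ is a nonlinear Fredholm map of index zero.
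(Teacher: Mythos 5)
Your overall plan---reduce to the Misio\l ek--Preston framework and exploit the two derivatives gained by the inertia operator $(1+\Delta)^{-1}$---is the right family of ideas, but your central step is wrong as stated. You claim that right-trivializing the Jacobi equation and applying Duhamel exhibits the solution operator as $\mathrm{Id}+(\text{compact})$. It does not, and the obstruction is exactly the loss of derivatives the paper flags in Remark 5.3: in the trivialized first-order Jacobi system the coefficients $\mathrm{ad}_u$ and $\mathrm{ad}^*_u$ are \emph{unbounded} on $T_e\D^s_\omega$. Indeed, by Lemma 4.1, if $w=J\nabla g+h$ then $\mathrm{ad}^*_u w=(1+\Delta)^{-1}P\left((1+\Delta)\Delta g\cdot Ju\right)$, which puts four derivatives on the potential $g$ of $w$ and recovers only two, so $\mathrm{ad}^*_u$ maps $H^s$ to $H^{s-1}$; your hypothesis that ``$\mathrm{ad}^*_u$ is bounded on $H^s$ with no net loss of derivatives'' is precisely what fails. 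The operator that genuinely smooths is the \emph{partial} coadjoint $K_v: w\mapsto \mathrm{ad}^*_w v=(1+\Delta)^{-1}P\left((1+\Delta)\Delta f\cdot Jw\right)$ (with $f$ the potential of $v$), where $w$ enters only through pointwise multiplication, and this is the compact operator the paper uses in Proposition 5.2. Moreover ``identity plus compact'' cannot be the correct structure: were it so, index-zero Fredholmness would be automatic for every weak right-invariant metric of this type, whereas it is known to fail, e.g., for the $L^2$ metric on the volume-preserving diffeomorphism group of a three-manifold (cf.\ the discussion in \cite{MP}).

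What the paper actually does is integrate the conservation law $\mathrm{Ad}^*_{\eta(t)}v(t)=v_0$ to obtain the factorization \eqref{eq:solop}, $\Phi_t=D\eta(t)\left(\Omega_t-\Gamma_t\right)$ with $\Omega_t=\int_0^t \mathrm{Ad}_{\eta(\tau)^{-1}}\mathrm{Ad}^*_{\eta(\tau)^{-1}}\,d\tau$: an \emph{invertible}-plus-compact decomposition, not a perturbation of the identity. This structure creates obligations your sketch omits entirely: (i) since $\mathrm{Ad}_{\eta}$ itself loses a derivative, the decomposition is first justified for $v_0\in H^{s+1}$ (Remark 5.3); (ii) invertibility of $\Omega_t$ is soft only at the $H^1$ level (a positivity argument), while in the strong $H^s$ topology it requires the commutator estimate \eqref{eq:invertibilityestimate}, $\|w\|_{H^s}\le c_1\|\Omega_t w\|_{H^s}+c_2\|w\|_{H^{s-1}}$, proved along smooth geodesics, followed by approximation of $H^s$ data by smooth data using the local Lipschitz dependence of $D\exp_e$ on the initial velocity; and (iii) index zero is then deduced from semi-Fredholmness together with continuity of the index along $t\mapsto D\exp_e(tv)$ starting from the identity at $t=0$, not from the compact-perturbation-of-identity principle. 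Your observations that the projection is a zeroth-order pseudodifferential operator and that Rellich compactness drives the argument are consistent with the paper, but as assembled your proof has a genuine gap at its load-bearing step.
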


A bounded linear operator $L$ between Banach spaces is said to be
Fredholm if it has finite dimensional kernel and cokernel.
It then follows from the open mapping theorem that $\text{ran}\, L$ is closed.
$L$ is said to be semi-Fredholm if it has closed range and either its kernel or cokernel
is of finite dimension.
The index of $L$ is defined as
$
\mathrm{ind}\, L = \dim\ker L - \dim\mathrm{coker}\, L.
$
The set of semi-Fredholm operators is an open subset in the space of
all bounded linear operators and the index is a continuous function on this set
into $\mathbb{Z}\cup\left\{ \pm\infty\right\}$, cf. Kato \cite{Kato}.
A $C^1$ map $f$ between Banach manifolds is called Fredholm
if its Fr\'echet derivative $df(p)$ is a Fredholm operator at each point $p$ in the domain of $f$.
If the domain is connected then the index of the derivative is by definition
the index of $f$, cf. Smale \cite{Smale65}.

\hskip 0.5cm
Let $\gamma$ be a geodesic in a Riemannian Hilbert manifold.
A point $q=\gamma(t)$ is said to be conjugate to $p=\gamma(0)$ if the derivative
$D\exp_p(t\dot{\gamma}(0))$
is not an isomorphism considered as a linear operator between the tangent spaces at $p$ and $q$.
It is called monoconjugate if $D\exp_p(t\dot{\gamma}(0))$ fails to be injective
and epiconjugate if $D\exp_p(t\dot{\gamma}(0))$ fails to be surjective.
In general exponential maps of infinite dimensional Riemannian manifolds are not Fredholm.
For example, the antipodal points on the unit sphere in a Hilbert space with the induced metric
are conjugate along any great circle and the differential of the corresponding exponential map
has infinite dimensional kernel.
An ellipsoid constructed by Grossman \cite{Grossman65} provides
another example as it contains a sequence of monoconjugate points along a geodesic arc
converging to a limit point at which the derivative of the exponential map is injective but not surjective.
Such pathological phenomena are ruled out by the Fredholm property
because in this case monoconjugate and epiconjugate points must coincide, have finite multiplicities and cannot cluster along finite geodesic segments. In particular, the $H^1$ exponential map behaves like that of a finite dimensional manifold.

The paper is organized as follows: In the next section we give the functional analytic setting for the symplectomorphism group and the $H^1$ metric on vector fields. We then proceed to show that the $H^1$ exponential map is a local diffeomorphism on a neighbourhood of the identity diffeomorphism; that is, geodesics exist for at least a short time, are unique, and depend smoothly on their initial velocity. In the subsequent section we show that the domain of definition of the $H^1$ exponential map can be extended to the whole tangent space; this proves Theorem 1.1 above.
In the second half of the paper we prove Theorem 1.2. Fredholm properties of Riemannian exponential maps on diffeomorphism groups is now well understood and a general theory has been laid out by Misiolek and Preston \cite{MP}. For this reason we will sketch the argument and indicate where the necessary changes occur. Finally, in section 6 we show that the $H^1$ metric carries conjugate points via some simple examples.
\section{Preliminaries}\label{section preliminaries}

Let $M$ be a closed   symplectic manifold of dimension $2n$, with symplectic form $\omega$ and Riemannian metric $g$. We assume that $\omega$ and $g$ are compatible in the sense that there exists an almost complex structure $J:TM\to TM$ such that $J^2=-1$ and $\omega (v,w)=g(v,Jw)$. The volume form defined by $\omega$ coincides with the volume form induced by $g$ and we denote this volume form by $\mu$. Let $\mathcal{D}^{s}_{\omega}$ denote the group of all diffeomorphisms of Sobolev class $H^s$ which preserve the symplectic form $\omega$ on $M$. When $s>n+1$, $\mathcal{D}^{s}_{\omega}$ becomes an infinite dimensional manifold whose tangent space at the identity $T_e\mathcal{D}^{s}_{\omega}$ is given by $H^s$ vector fields $v$ on $M$ which satisfy $\mathcal{L}_v\omega=0$.

It is convenient to consider $\mathcal{D}^{s}_{\omega}$ as a submanifold of the full diffeomorphism group $\mathcal{D}^{s}$. According to the Hodge decomposition we have
\begin{equation} \label{eq:Hodge}
H^s(T^*M)=d\delta H^{s+2}(T^*M)\oplus\delta dH^{s+2}(T^*M)\oplus \H
\end{equation}
where $\H$ is the space of all $H^s$ harmonic forms on $M$ and the summands are orthogonal with respect to the $L^2$ inner product on vector fields
\begin{equation} \label{eq:L2metric}
\<v,w\>_{L^2}=\int_M g(v,w)d\mu, \qquad v,w \in T_{e}\mathcal{D}^{s}.
\end{equation}
The action of $\mathcal{D}^{s}_{\omega}$ on $\mathcal{D}^s$ by composition on the right is an isometry of the metric \eqref{eq:L2metric} and we obtain an $L^{2}$ orthogonal splitting in every tangent space

$$T_\eta\mathcal{D}^{s} =T_\eta\D^s_\omega \oplus\omega^{\sharp}\delta dH^{s+2}(T^*M)\circ\eta,  $$
where
\begin{equation*}
T_e\D^s_\omega=\omega^\sharp(d\delta H^{s+2}(T^*M)\oplus \H),
\end{equation*}
and $\omega^{\flat}:TM \rightarrow T^{*}M$ is the musical isomorphism defined by $\omega^{\flat}(v) = i_{v}\omega$ with inverse $\omega^{\sharp}:T^{*}M \rightarrow TM$. The projections onto the first and second summands depend smoothly on the basepoint $\eta$ and will be denoted by $P_{\eta}$ and $Q_{\eta}$, respectively, cf. Ebin and Marsden \cite{Ebin-Marsden}.

The positive-definite Hodge Laplacian is defined on vector fields by $\Delta v = \omega^{\sharp}(d\delta\omega^{\flat}v + \delta d\omega^{\flat}v)$. On the full diffeomorphism group $\mathcal{D}^{s}$, define the right-invariant $H^1$ metric at the identity diffeomorphism by
\begin{equation} \label{eq:H1metric}
\langle u,v \rangle_{H^1} = \int_{M} g((I+\Delta)u,v)d\mu, \qquad u, v \in T_e\mathcal{D}^{s}_{\omega}.
\end{equation}
This metric is smooth, as shown in \cite{Shkoller98}, and we obtain, by restriction, a smooth right-invariant Riemannian metric on the group $\mathcal{D}^{s}_{\omega}$.
Since $T_e\mathcal{D}^{s}_{\omega}$ is, up to a finite dimensional subspace, generated by functions, the $L^2$ metric on vector fields corresponds to an $H^1$ metric on Hamiltonian functions and the $H^1$ metric on vector fields corresponds to an $H^2$ metric on Hamiltonian functions. As mentioned in the introduction, it is known that all $L^p$ metrics on Hamiltonian functions are degenerate, but the Sobolev $H^r$ metrics admit smooth exponential maps which are defined on the whole tangent space (see section 4) and whose singularities are similar to those of a finite dimensional manifold (see section 5).

%
The following Lie theoretic operators will be useful in the subsequent sections. Recall that for any $\eta \in \mathcal{D}_{\omega}^{s}$ the group adjoint operator on
$T_{e}\mathcal{D}_{\omega}^{s}$ is given by $\mathrm{Ad}_{\eta} = dR_{\eta^{-1}}dL_{\eta}$
where
$R_{\eta}$ and $L_{\eta}$ denote the right and left translations by $\eta$.
Consequently, given any $v, w \in T_e\mathcal{D}_{\omega}^{s}$ we have
\begin{equation} \label{eq:Ad}
w \to \mathrm{Ad}_{\eta} w
=
\eta_\ast w
=
D\eta \circ \eta^{-1}( w \circ \eta^{-1})
\end{equation}
For $v\in T_e\mathcal{D}^{s}_{\mu}$, the algebra adjoint $\mathrm{ad_{v}}$ on $T_e\mathcal{D}^{s}_{\mu}$ is defined by
\[
\mathrm{ad}_{v}u=J\nabla\omega(v,u).
\]
and is related to the group adjoint by
\begin{equation} \label{eq:gaadjoint}
\frac{d}{dt}\mathrm{Ad}_{\eta^{-1}(t)}u=-\mathrm{\mathrm{Ad}_{\eta^{-1}(t)}}\mathrm{ad}_{v(t)}u,
\end{equation}
where $v(t)=\dot{\eta(t)}\circ \eta^{-1}(t)$.
The associated coadjoint operators are defined using the $H^1$ inner product by
\begin{equation} \label{eq:grpcoAd-1}
\langle \mathrm{Ad}_{\eta}^{*}v,w\rangle _{H^{1}}
=
\langle v,\mathrm{Ad}_{\eta}w \rangle _{H^{1}}
\end{equation}
and
\begin{equation} \label{eq:grpcoad-1}
\langle \mathrm{ad}_{v}^{*}u,w \rangle _{H^{1}}
=
\langle u,\mathrm{ad}_{v}w \rangle _{H^1}
\end{equation}
for any $u, v$ and $w \in T_e\mathcal{D}_{\omega}^{s}$.
The group and algebra coadjoint operators are similarly related via
\begin{equation} \label{eq:gacoadjoint}
\frac{d}{dt}\mathrm{Ad}_{\eta^{-1}(t)}^{*}u=-\mathrm{ad}_{v(t)}^{*}\mathrm{Ad}_{\eta^{-1}(t)}^{*}u.
\end{equation}
Given a Lie group $G$ with a (possibly weak) right-invariant metric $\sigma$ it is well-known that a curve $\eta(t)$ is a geodesic if and only if the curve $v(t)$ in $T_{e}G$, defined by the flow equation
\begin{equation} \label{eq:flow}
\dot{\eta}=v(t)\circ\eta(t),
\end{equation}
satisfies the Euler-Arnold equation
\begin{equation} \label{eq:E-A}
\dot{v}(t)=-\mathrm{ad}^{*}_{v(t)}v(t).
\end{equation}

For a proof of this fact we refer the reader to Arnold-Khesin \cite{Arnold98},  Khesin-Wendt \cite{KhesinWendt} or Taylor \cite{Taylor-Ev}.

%
%
%
%
%
\section{Geodesics of $(\D^s_\omega, \langle ,\rangle_1)$}
We now proceed to form the Euler-Arnold equations on $T_{e}\mathcal{D}^{s}_{\omega}$. In order to prove local existence and uniqueness it is convenient to rewrite the equations in terms of the flow equation \eqref{eq:flow}. We will then show that the resulting second order (geodesic) equation is a smooth ODE on the infinite dimensional manifold $\mathcal{D}^{s}_{\omega}$ and can be solved using a standard Banach-Picard iteration scheme.

The basic technique was developed by Ebin and Marsden \cite{Ebin-Marsden} in their study of the Cauchy problem for the Euler equations of hydrodynamics. The proof we use here is completely analogous to the proofs given in \cite{Ebin-Marsden} and the proofs given by Shkoller \cite{Shkoller98} in his study of the $H^1$ metric on the volume preserving diffeomorphism group. Additionally, the basic theory for geodesics of right-invariant Sobolev metrics on certain diffeomorphism groups has been laid out by Misiolek and Preston \cite{MP}. For this reason we will give an outline of the argument and refer the reader to the aforementioned references.

%

%
%
%
%
\begin{Pro}
The Euler-Arnold equation of the $H^1$ metric \eqref{eq:H1metric} is given by
\begin{equation} \label{eq:ASE}
(1+\L)v_t+  P_{e}(\n_v(1+\L)v+(\n v)^t\L v)=0,
\end{equation}
where $P_{e}$ is the $L^2$ orthogonal projection onto $T_e\mathcal{D}^{s}_{\omega}$.
\end{Pro}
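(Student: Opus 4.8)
The plan is to derive the Euler-Arnold equation by computing the coadjoint operator $\mathrm{ad}_v^*$ explicitly with respect to the $H^1$ inner product \eqref{eq:H1metric}, and then substituting into the abstract equation \eqref{eq:E-A}. The starting point is the defining relation \eqref{eq:grpcoad-1}, namely $\langle \mathrm{ad}_v^* u, w\rangle_{H^1} = \langle u, \mathrm{ad}_v w\rangle_{H^1}$, together with the given algebra adjoint $\mathrm{ad}_v u = J\nabla\omega(v,u)$, which (up to the sign and musical-isomorphism conventions in use) is the Lie bracket of vector fields restricted to the symplectic algebra. The key observation is that the $H^1$ inner product is the $L^2$ inner product precomposed with the invertible operator $(I+\Delta)$, so I would first compute $\mathrm{ad}_v^*$ with respect to the $L^2$ metric and then conjugate by $(I+\Delta)$.

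First I would write $\langle u, \mathrm{ad}_v w\rangle_{H^1} = \langle (I+\Delta)u, \mathrm{ad}_v w\rangle_{L^2} = \langle (I+\Delta)u, [v,w]\rangle_{L^2}$. The heart of the calculation is integrating by parts against the Lie bracket. Using the metric-adjoint of the Lie derivative on $L^2$, one has an identity of the schematic form $\langle X, [v,w]\rangle_{L^2} = \langle \nabla_v X + (\nabla v)^t X + (\operatorname{div}v)X, w\rangle_{L^2}$ for general vector fields, where $(\nabla v)^t$ is the transpose of the covariant derivative; since $v$ is symplectic it is in particular divergence-free, so the $(\operatorname{div}v)X$ term drops. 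Setting $X=(I+\Delta)v$ and using $u=v$ as in \eqref{eq:E-A}, this produces $\langle \nabla_v(I+\Delta)v + (\nabla v)^t(I+\Delta)v, w\rangle_{L^2}$. To land exactly on the claimed form I would split $(\nabla v)^t(I+\Delta)v = (\nabla v)^t v + (\nabla v)^t \Delta v$ and absorb the lower-order pieces; the term $(\nabla v)^t v$ is a gradient and is therefore killed by the $L^2$ projection $P_e$ onto divergence-free (symplectic) fields, leaving the transport term $\nabla_v(I+\Delta)v$ together with $(\nabla v)^t\Delta v$, which matches the bracketed expression in \eqref{eq:ASE}.

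Finally, to pass from the $L^2$ pairing back to the equation I would invoke \eqref{eq:E-A}: $(I+\Delta)v_t = (I+\Delta)\dot v$ equals $-(I+\Delta)\mathrm{ad}_v^* v$, and since the $H^1$-coadjoint is $(I+\Delta)^{-1}$ composed with the $L^2$-coadjoint computed above, the operator $(I+\Delta)$ cancels, yielding $(1+\Delta)v_t + P_e\bigl(\nabla_v(1+\Delta)v + (\nabla v)^t\Delta v\bigr)=0$. Here I would be careful that the projection $P_e$ appears on the outside precisely because the Euler-Arnold dynamics take place in $T_e\mathcal{D}^s_\omega$, so every gradient-type term produced by the integration by parts is annihilated and only its projection survives.

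The main obstacle I expect is the integration-by-parts identity for $\langle X,[v,w]\rangle_{L^2}$ in the symplectic setting: one must carefully track the musical isomorphisms $\omega^\flat,\omega^\sharp$ and the almost-complex structure $J$ relating the Lie bracket to $J\nabla\omega(v,w)$, verify that the compatibility of $g$ and $\omega$ lets the $L^2$ adjoint of the covariant derivative be taken as in the volume-preserving case, and confirm that the divergence terms genuinely vanish for symplectic (hence volume-preserving) fields. Because the algebraic bookkeeping of transpose operators and the identification of which terms are pure gradients is delicate, I would treat this as the technical core and otherwise defer, as the paper suggests, to the analogous computations in Shkoller \cite{Shkoller98} and Misiolek-Preston \cite{MP}.
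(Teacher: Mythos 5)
Your proposal is correct and follows essentially the same route as the paper: both compute $\mathrm{ad}_v^*$ by passing $(I+\Delta)$ through the $L^2$ pairing, integrating by parts with the $L^2$-adjoint formula $\mathcal{L}_v^* X = -(\operatorname{div} v)X - (\nabla v)^t X - \nabla_v X$, discarding the divergence term (symplectic fields are divergence-free) and the gradient term $(\nabla v)^t v = \tfrac12\nabla|v|^2$ (annihilated against $T_e\mathcal{D}^s_\omega$, i.e.\ by $P_e$), then inserting $P_e$ and cancelling $(1+\Delta)$ in the Euler--Arnold equation \eqref{eq:E-A}. The only difference is bookkeeping: you hedge the sign relating $\mathrm{ad}_v w = J\nabla\omega(v,w)$ to $[v,w]$, whereas the paper fixes the convention $\mathrm{ad}_v u = -\mathcal{L}_v u$, after which your computation is the paper's verbatim.
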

\begin{proof}
Let  $u,v\in T_e\mathcal{D}^{s}_{\omega}$. The Euler-Arnold equation is given by
\begin{equation}\label{eq:avsympeuler}
v_t+\textrm{ad}^*_vv=0
\end{equation}
Using the formula $(\mathcal{L}_X)^*Y=-(\textrm{div}X)Y-(\n X)^tY-\n_XY$ for vector fields $X$ and $Y$, along with the divergence theorem, we have
\begin{eqnarray*}
\<\textrm{ad}^*_vv,u\>_1  &=&  \<v,\textrm{ad}_vu\>_1\\
&=&\int_Mg(\textrm{ad}_vu,(1+\L)v)d\mu\\
&=&\int_Mg(-\mathcal{L}_vu, (1+\L)v)d\mu\\
&=&\int_Mg(u, -\mathcal{L}_v^*(1+\L)v)d\mu\\
&=& \int_Mg  \Big( u, \n_v(1+\L)v +(\n v)^t(1+\L)v+\textrm{div}v(1+\L)v  \Big)d\mu\\
&=& \int_Mg  \Big( u, \n_v(1+\L)v +(\n v)^t(\L)v  \Big)+g  \Big( u, (\n v)^tv  \Big)  d\mu\\
&=& \int_Mg  \Big( u, \n_v(1+\L)v +(\n v)^t(\L)v  \Big)
\end{eqnarray*}
Since the expression in the last line is the $L^2$ metric, and $u \in T_e\mathcal{D}^{s}_{\omega}$, we must compose with the $L^2$ Hodge projection $P_e$. It follows that
\begin{equation*}
\<\textrm{ad}^*_vv,u\>_{H^1} =\langle (1+\Delta)^{-1}P_e (\n_v(1+\L)v +(\n v)^t(\L)v),u\rangle_{H^1}
\end{equation*}
and since this holds for any $u \in T_e\mathcal{D}^{s}_{\omega}$ the Euler-Arnold equations take the form

\begin{equation*}
(1+\Delta)v_t+P_{e}(\n_v(1+\Delta)v +(\n v)^t(\Delta)v)=0
\end{equation*}
\end{proof}
%
%
%
%

%
%
%
%
\begin{The}
If $s>n+3$, then the $H^1$ metric \eqref{eq:H1metric} and associated $H^1$ connection are smooth. Therefore, the Riemannian $H^1$ exponential map is smooth and gives a local diffeomorphism at the identity element of $\mathcal{D}^{s}_{\omega}$.
\end{The}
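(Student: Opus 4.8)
The plan is to pass to the material (Lagrangian) representation, reduce the statement to smoothness of the geodesic spray, and then invoke the standard existence theory for ODEs on Hilbert manifolds. Smoothness of the metric \eqref{eq:H1metric} itself needs little: on the full group $\D^s$ it is smooth by Shkoller \cite{Shkoller98}, and since $\D^s_\omega$ is a smooth submanifold of $\D^s$ the restricted metric inherits this. The real content lies in the connection, because \eqref{eq:H1metric} is only a \emph{weak} Riemannian metric --- its topology is strictly weaker than the $H^s$ topology of the tangent spaces --- so the existence of a smooth Levi--Civita connection, equivalently a smooth geodesic spray, is not automatic and must be verified directly.

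To set up the spray I would rewrite the Euler--Arnold equation \eqref{eq:ASE} in Lagrangian form via the flow equation \eqref{eq:flow}. Putting $v=\dot\eta\circ\eta^{-1}$ and differentiating $\dot\eta=v\circ\eta$ once more gives
\begin{equation*}
\ddot\eta=\big(\dot v+\n_v v\big)\circ\eta=\big(\n_v v-\mathrm{ad}^*_v v\big)\circ\eta,
\end{equation*}
the second equality using \eqref{eq:E-A}. Hence the geodesic equation is the second order equation $\ddot\eta=S(\eta,\dot\eta)$ with spray
\begin{equation*}
S(\eta,\dot\eta)=\Big(\n_v v-(1+\L)^{-1}P_e\big(\n_v(1+\L)v+(\n v)^t\L v\big)\Big)\circ\eta .
\end{equation*}
The task is to show that, for $s>n+3$, this $S$ is a smooth map on $T\D^s_\omega$, i.e. that the bracketed expression is an $H^s$ field depending smoothly on $(\eta,\dot\eta)$.

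The crux, and the only genuinely hard step, is the apparent loss of derivatives. Taken separately each bracketed term loses one derivative: $\n_v v\in H^{s-1}$, while inside $\mathrm{ad}^*_v v$ the cubic term $\n_v(1+\L)v$ lies in $H^{s-3}$, of which $(1+\L)^{-1}$ restores only two. The resolution, following Ebin--Marsden \cite{Ebin-Marsden} and the general scheme of Misiolek--Preston \cite{MP}, is a cancellation. Writing $A=1+\L$ and commuting $\n_v$ past $A$,
\begin{equation*}
A^{-1}\n_v(Av)=\n_v v+A^{-1}[\n_v,\L]v ,
\end{equation*}
and since the commutator $[\n_v,\L]$ has order two, $A^{-1}[\n_v,\L]$ is bounded on $H^s$; thus $A^{-1}\n_v(Av)-\n_v v$ loses no derivatives. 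The two copies of $\n_v v$ in $S$ therefore subtract off, and the surviving contributions --- including those from the Hodge projection $P_e$, whose codifferential structure lowers the top order exactly as in the incompressible Euler spray, and the term $(\n v)^t\L v$, which already gains back its two derivatives under $A^{-1}$ --- are of order zero in the leading derivative. This is where the hypothesis $s>n+3$ enters: it sits two orders above the threshold $s>n+1$ that makes $\D^s_\omega$ a manifold, the surplus absorbing the two derivatives carried by the second order inertia operator $A$ and guaranteeing both the embedding $H^s\hookrightarrow C^3$ (recall $\dim M=2n$) and the Banach-algebra properties of the Sobolev spaces that arise. The smooth dependence on $\eta$ of the conjugated operators $R_\eta A^{-1}P_e R_{\eta^{-1}}$ is the content of the Ebin--Marsden composition lemmas, which I would quote rather than reprove.

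Once $S$ is known to be smooth the rest is formal. The integral curves of the associated second order field on $T\D^s_\omega$ exist for short time, are unique, and depend smoothly ($C^\infty$) on their initial data by the Cauchy--Lipschitz theorem on Hilbert manifolds; these are the geodesics, and they define a smooth exponential map $\exp_e$ near $0\in T_e\D^s_\omega$. Since $\exp_e(tv)$ is the geodesic with $\exp_e(0)=e$ and initial velocity $v$, one has $D\exp_e(0)=\mathrm{Id}$ on $T_e\D^s_\omega$, so the inverse function theorem for Banach manifolds gives that $\exp_e$ is a local diffeomorphism at the identity, as claimed.
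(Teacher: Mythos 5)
Your proposal is correct and follows essentially the same route as the paper's own proof: both pass to the Lagrangian form of the Euler--Arnold equation \eqref{eq:ASE}, isolate the apparent loss of derivatives in the cubic term, and resolve it by the same cancellation --- the paper phrases your identity $(1+\L)^{-1}\n_v(1+\L)v-\n_v v=(1+\L)^{-1}[\n_v,\L]v$ as the second line of its rewritten geodesic equation being the conjugated commutator of a first and a second order operator, hence of order two --- with the Ebin--Marsden lemmas on $\eta$-conjugated operators and the smooth projection $P_\eta$, and the ODE theorem on Banach manifolds supplying the rest. The only cosmetic differences are that you state the cancellation identity and the final step ($D\exp_e(0)=\mathrm{Id}$ plus the inverse function theorem) explicitly, where the paper leaves them implicit.
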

\begin{proof}
The $H^1$ metric \eqref{eq:H1metric} on the full diffeomorphism group $\mathcal{D}^s$ has been shown to be smooth in \cite{Shkoller98}. Since the $H^1$ metric on $\mathcal{D}^{s}_{\omega}$ is obtained by restriction of the $H^1$ metric on $\mathcal{D}^s$ we get a smooth metric on the symplectomorphism group.

For the existence of a smooth connection we rewrite the Euler-Arnold equation in Proposition 3.1 in terms of the flow $\eta$, and show that the resulting second order (geodesic) equation is a smooth ODE. In order to do this we first introduce the notation $L_{\eta}$ for any linear operator $L$ on functions or vector fields on $M$. Define $L_{\eta}u=L(u\circ\eta^{-1})\circ\eta$. Then, using the formulas $v=\dot{\eta}\circ\eta^{-1}$ and $\frac{D}{dt}\frac{d\eta}{dt}=(\partial_{t}v+\nabla_{v}v)\circ\eta$ we rewrite the Euler-Arnold equation of Proposition 3.1 as
\begin{align*}
\frac{D}{dt}\frac{d\eta}{dt}&=(1+\Delta)_{\eta}^{-1}P_{\eta}[(\nabla\dot{\eta}\circ{\eta^{-1}})^{\dagger}_{\eta}(1+\Delta)_{\eta}\dot{\eta}]\\
&-(1+\Delta)_{\eta}^{-1}P_{\eta}((\nabla_{\dot{\eta}\circ\eta^{-1}})_{\eta}(1+\Delta)_{\eta}\dot{\eta}-(1+\Delta)_{\eta}(\nabla_{\dot{\eta}\circ\eta^{-1}})_{\eta}\dot{\eta}).
\end{align*}
We begin with the first line of this expression. If $D$ is a first order differential operator on vector fields, then the conjugated operator $D_{\eta}$ is continuous as a map from vector fields of class $H^m$ to those of class $H^{m-1}$, as long as $\eta$ is a diffeomorphism of class $s>n+1$ and $0\leq m \leq s$; see Ebin and Marsden \cite{Ebin-Marsden} for details on this. In fact, if $\partial_{i}$ is any partial derivative, a direct computation shows that $(\partial_{i})_{\eta}f=(\partial_{i}\eta^{j})^{-1}\partial_{i}f$, where $^{-1}$ means matrix inverse. So $(\partial_{i})_{\eta}$ is smooth as a map from $H^s(M,R)\times\mathcal{D}^s\rightarrow H^{s-1}(M,R)$. Applying this to our situation we observe that the term in the square brackest is a first order differential operator, conjugated by $\eta$ and is hence smooth by the above reasoning. The operator $((1+\Delta)(\dot{\eta}\circ\eta^{-1})\circ\eta)$ is smooth as a map into vector fields of class $H^{s-2}$. Since $s-1>n+2$ and the term in the square bracket is $H^{s-1}$, the product of terms in $H^{s-1}$ and $H^{s-2}$ is again in $H^{s-2}$. Smoothness of the $L^2$ projection operator operator $P_{\eta}$ was established in \cite{Ebin-Marsden} and since all remaining terms involve multiplication, the application of $dR_{\eta}(I+\Delta)^{-1}dR_{\eta^{-1}}$ maps smoothly back into $H^s$ by general principles of bundle maps.
To analyze the second line observe that this is the commutator of a first order differential operator and a second order differential operator and thus is of order 2. In particular, calculating this commutator explicitly one sees that all terms involve multiplication by certain $\eta$ - conjugated derivatives of $\partial_{t}\eta$, which are bounded in the Sobolev topology under the assumption that $s>n+3$.
Having established smoothness of the right-hand side of the Lagrangian form of the geodesic equation the Theorem follows from the fundamental theorem of ODE's on Banach manifolds, cf. \cite{L}.
\end{proof}

\section{$H^{1}$ Geodesic Completeness of $\mathcal{D}^{s}_{\omega}$}

To prove global existence of $H^1$ geodesics we will use a form of the Euler-Arnold equations which is different to that given in Proposition 3.1. It is convenient to write elements of $T_e\mathcal{D}^s_{\omega}$ using the almost complex structure $J$. Indeed, any element of $T_e\mathcal{D}^s_{\omega}$ may be written as
\begin{equation*}
v=J\nabla F + h
\end{equation*}
for some $H^{s+1}$ function $F$ on $M$, with zero mean, and a harmonic vector field $h$.
\begin{Lem}\label{lemma1}
The $H^1$ algebra coadjoint operator on $T_{e}\mathcal{D}_{\omega}^{s}$
is given by
\begin{equation}
\mathrm{ad}_{v}^{*}w=(1+\Delta)^{-1}P\left((1+\Delta)\Delta g\cdot Jv\right),
\end{equation}
where $P$ denotes $L^{2}$ orthogonal projection onto $T_{e}\mathcal{D}_{\omega}^{s}$ and $w=J\nabla g + h$ for some function $g$ and harmonic vector field $h$.
\end{Lem}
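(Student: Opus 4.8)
The plan is to work directly from the defining relation $\langle \mathrm{ad}_v^* w, u\rangle_{H^1} = \langle w, \mathrm{ad}_v u\rangle_{H^1}$, which must hold for every $u \in T_e\mathcal{D}^s_\omega$, and to massage the right-hand side until it appears as an $H^1$ pairing against $u$. First I would use the given formula $\mathrm{ad}_v u = J\nabla\omega(v,u)$ and abbreviate $f = \omega(v,u)$, so that $\mathrm{ad}_v u = J\nabla f = \omega^\sharp(df)$ is the Hamiltonian vector field of $f$. Expanding the pairing gives $\langle w, \mathrm{ad}_v u\rangle_{H^1} = \int_M g((1+\Delta)w, J\nabla f)\,d\mu$. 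Using that $J$ is skew-adjoint for $g$ (a consequence of $\omega(\cdot,\cdot) = g(\cdot, J\cdot)$ together with the antisymmetry of $\omega$) I move $J$ onto the other factor, and then integrate by parts via the divergence theorem to push the gradient off $f$. This converts the pairing into $\int_M f\,\mathrm{div}(J(1+\Delta)w)\,d\mu$, so that everything has been transferred onto a single scalar coefficient multiplying $f$.

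The heart of the argument is then to evaluate the scalar $\mathrm{div}(J(1+\Delta)w)$ when $w = J\nabla g + h$. The key structural point --- and the reason the Hodge Laplacian is defined through the symplectic musical isomorphisms as $\Delta = \omega^\sharp(d\delta+\delta d)\omega^\flat$ rather than through $g$ --- is that $\Delta$ intertwines with the map $g\mapsto J\nabla g = \omega^\sharp(dg)$. Indeed, since $\omega^\flat\omega^\sharp = \mathrm{id}$ and $d$ commutes with the scalar Hodge Laplacian, one gets $\Delta(J\nabla g) = \omega^\sharp(d\delta dg) = \omega^\sharp(d\,\Delta g) = J\nabla(\Delta g)$, whence $(1+\Delta)w = J\nabla((1+\Delta)g) + h$ using $\Delta h = 0$. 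Applying $J$ turns the Hamiltonian part into $-\nabla((1+\Delta)g)$, and I would then compute $\mathrm{div}(-\nabla((1+\Delta)g)) = \Delta(1+\Delta)g = (1+\Delta)\Delta g$ from $\mathrm{div}\,\nabla = -\Delta$ on functions. It remains to check that the harmonic term drops out, i.e.\ $\mathrm{div}(Jh) = 0$; this follows from harmonicity of $h$, since $Jh$ is then (up to sign) again a coclosed, symplectic vector field and hence divergence free. Collecting terms yields $\mathrm{div}(J(1+\Delta)w) = (1+\Delta)\Delta g$.

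To finish I substitute this back and rewrite $f = \omega(v,u) = -g(Jv,u)$, so that $\int_M f\,(1+\Delta)\Delta g\,d\mu$ becomes the $L^2$ pairing $\langle (1+\Delta)\Delta g\cdot Jv,\,u\rangle_{L^2}$ (the sign being pinned down by the chosen conventions for $\omega$, $\mathrm{ad}$ and $\Delta$). Because $u$ ranges only over $T_e\mathcal{D}^s_\omega$, only the $L^2$-orthogonal projection of the integrand onto $T_e\mathcal{D}^s_\omega$ is detected, which inserts the projector $P$. Finally I convert the $L^2$ pairing into an $H^1$ pairing: since $\langle A,u\rangle_{L^2} = \langle (1+\Delta)^{-1}A, u\rangle_{H^1}$ and $(1+\Delta)^{-1}$ preserves $T_e\mathcal{D}^s_\omega$, non-degeneracy of the $H^1$ metric forces $\mathrm{ad}_v^* w = (1+\Delta)^{-1}P((1+\Delta)\Delta g\cdot Jv)$, as claimed.

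I expect the main obstacle to be the intertwining identity $\Delta(J\nabla g) = J\nabla(\Delta g)$ together with the vanishing $\mathrm{div}(Jh) = 0$: both hinge on using the symplectic rather than the metric musical isomorphisms in the definition of $\Delta$ and on the precise sense in which $h$ is harmonic. The bookkeeping of signs through the two integrations by parts and the identity $g(\cdot, J\cdot) = -g(J\cdot,\cdot)$ also needs care. Once these are in hand the remaining steps are routine Hodge theory and duality.
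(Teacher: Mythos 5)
Your proof is correct and takes essentially the same route as the paper's: both expand the defining relation $\langle \mathrm{ad}_v^* w, u\rangle_{H^1} = \langle w, \mathrm{ad}_v u\rangle_{H^1}$, use the intertwining $(1+\Delta)(J\nabla g + h) = J\nabla (1+\Delta)g + h$ coming from the symplectic definition of $\Delta$, integrate by parts to isolate the scalar $(1+\Delta)\Delta g$ against $\omega(v,u)$, and insert $P$ and $(1+\Delta)^{-1}$ in exactly the same way. The only difference is bookkeeping --- you push all derivatives onto $\mathrm{div}\bigl(J(1+\Delta)w\bigr)$ whereas the paper pairs $\nabla(1+\Delta)g$ with $\nabla\omega(v,u)$ --- and you in fact make explicit the vanishing of the harmonic contribution, $\mathrm{div}(Jh)=0$, which the paper drops without comment.
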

\begin{proof}

Let $X\in T_{e}\mathcal{D}_{\omega}^{s}$. Then
\begin{align*}
\left(\mathrm{ad}_{v}^{*}w,X\right)_{H^{1}}&=\left(w,\mathrm{ad}_{v}X\right)_{H^{1}}\\
&=\int_{M}g\left(\left(1+\Delta\right)w,\mathrm{ad}_{v}X\right)\,d\mu\\
&=\int_{M}g\left(J\nabla(1+\Delta)g + h,J\nabla\omega(v,X)\right)\,d\mu\\
&=\int_{M}g\left(\nabla(1+\Delta)g,\nabla\omega(v,X)\right)\,d\mu\\
&=-\int_{M}\Delta(1+\Delta)g\cdot\omega(v,X)\,d\mu\\
&=-\int_{M}\omega(\left[(1+\Delta)\Delta g\right]v,X)\,d\mu\\
&=\int_{M}g\left(P\left[(1+\Delta)\Delta g\right]Jv,X\right)\,d\mu.
\end{align*}
Observe that the expression after the last equality is the $L^2$ metric and $X$ is in $T_{e}\mathcal{D}_{\omega}^{s}$. This is why we must compose with the projection $P_e$. This last expression can finally be written as
\begin{equation*}
=\left((1+\Delta)^{-1}P\left(\left[(1+\Delta)\Delta g\right]Jv\right),X\right)_{H^{1}}
\end{equation*}
which proves the Lemma.
\end{proof}

To prove Theorem 1.1 we shall show that the $H^s$ norm of the local solution $v$ remains bounded on any finite time interval. Using Lemma 4.1, we can estimate the growth of $v(t)$ in the $H^{s}$ norm by
\begin{align*}
\partial_{t}\frac{1}{2}\left(v,v\right)_{H^{s}}&=\left(-\mathrm{ad}_{v}^{*}v,\sum_{k=0}^{s}\Delta^{k}v\right)_{L^{2}}\\
&=\left(-(1+\triangle)^{-1}P\left((1+\triangle)\triangle f\cdot Jv\right),\sum_{k=0}^{s}\triangle^{k}v\right)_{L^{2}}\\
&\leq\left\Vert (1+\triangle)^{-1}P\left((1+\triangle)\triangle f\cdot Jv\right)\right\Vert _{L^{2}}\left\Vert v\right\Vert _{H^{s}}\\
&\lesssim\left\Vert (1+\triangle)\triangle f\cdot Jv\right\Vert _{L^{2}}\left\Vert v\right\Vert _{H^{s}}\\
&\lesssim\left\Vert f\right\Vert _{C^{4}}\cdot\left(v,v\right)_{H^{s}}
\end{align*}
so that if the $C^{4}$ norm of the potential $f$ remains bounded
on any finite time interval then so does the $H^{s}$ norm of $v$.

Multiplying both sides of the Euler-Arnold equation \eqref{eq:E-A} by $\mathrm{Ad}_{\eta(t)}^{*}$, and using \eqref{eq:gacoadjoint} we obtain the conservation law
\begin{equation} \label{eq:conservation}
\mathrm{Ad}_{\eta(t)}^{*}v(t)=v_{o}.
\end{equation}
Therefore, the integral
\begin{align*}
\int_{M}\eta^{*}\left(g^{\flat}\left(\left(1+\triangle\right)v(t)\right)\right)(w)\,d\mu&=\left(v(t),\mathrm{Ad}_{\eta(t)}w\right)_{H^{1}}\\
&=\left(\mathrm{Ad}_{\eta(t)}^{*}v(t),w\right)_{H^{1}}\\
&=const.
\end{align*}
 is constant for all $w\in T_{e}\mathcal{D}_{\omega}^{s}$. It follows
from the Hodge decomposition \eqref{eq:Hodge} that there exists a 2-form $\lambda(t)$
such that
\begin{equation*}
g^{\flat}\left(\left(1+\triangle\right)v_{o}\right)=\eta^{*}g^{\flat}\left(\left(1+\triangle\right)v(t)\right)+g^{\flat}\omega^{\sharp}\left(\delta\lambda(t)\right).
\end{equation*}
Applying $\delta\omega^{\flat}g^{\sharp}$ to both sides gives
\begin{equation*}
\delta\omega^{\flat}\left(1+\triangle\right)v_{o}=\delta\omega^{\flat}g^{\sharp}\eta^{*}g^{\flat}\left(1+\triangle\right)v(t).
\end{equation*}
The left hand side reduces to $\delta\omega^{\flat}\left(1+\triangle\right)v_{o}=\delta\left(d(1+\triangle)f_{o}\right)=\triangle(1+\triangle)f_{o}$
while the right-hand side is computed as $\left(\triangle(1+\triangle)f(t)\right)\circ\eta(t)$
using that $\delta\alpha=\ast di_{g^{\sharp}\alpha}\omega^{n}$ for
any one-form $\alpha$ (cf. \cite{Ebin} for details of this computation). We conclude that
\[
\triangle(1+\triangle)f_{o}=\left(\triangle(1+\triangle)f(t)\right)\circ\eta(t).
\]

Since $v_{o}\in H^{s}$ for $s>n+3$ there exists a positive $\alpha$
such that $v_{o}\in C^{3,\alpha}$ and hence $\triangle f(0)\in C^{2,\alpha}$.
Therefore
\begin{equation}
\triangle(1+\triangle)f(0)\in C^{0,\alpha}\label{eq:continuousholder}
\end{equation}
and it follows that $\left\Vert \triangle(1+\triangle)f(t)\right\Vert _{C^{0}}$
is bounded independently of $t$. In particular, $\left\Vert \triangle f(t)\right\Vert _{C^{0}}$
is bounded independently of $t$.
\begin{Lem} \label{lemma2}
If $\triangle g=q$ is a continuous function on $M$ with $\left\Vert q\right\Vert _{C^{0}}\leq Q$
then $dg$ is quasi-Lipschitz; that is, in each chart $U$, $dg$
satisfies the inequality
\begin{equation}
\left|dg(x)-dg(y)\right|\leq C\left|x-y\right|\log\left(\frac{1}{\left|x-y\right|}\right)
\end{equation}
with $\left|x-y\right|\leq1$.
\end{Lem}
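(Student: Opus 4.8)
The plan is to represent $g$ through the Green's function of the Laplacian on $M$ and then run the classical potential-theoretic splitting that produces a logarithmic modulus of continuity (this is the manifold version of the log-Lipschitz estimate familiar from two-dimensional Euler theory). Since $M$ is closed and connected, the compatibility condition $\int_M q\,d\mu=\int_M\triangle g\,d\mu=0$ holds automatically, so there is a Green's function $N(x,y)$ with $g(x)=\int_M N(x,y)\,q(y)\,d\mu(y)$ modulo an additive constant, constants being the only harmonic functions on a closed connected manifold; the constant is annihilated by $d$. The essential input is the local singularity of $N$: building $N$ from a parametrix for the elliptic operator $\triangle$, in each chart one has $|\nabla_x N(x,y)|\lesssim|x-y|^{1-2n}$ and $|\nabla^2_x N(x,y)|\lesssim|x-y|^{-2n}$, matching the Euclidean Newtonian kernel, together with a remainder that is smoother and so contributes only a Lipschitz term absorbed below.

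Because $(1-2n)+(2n-1)=0$, the first-derivative kernel is integrable, so I may differentiate under the integral to write $dg(x)=\int_M\nabla_x N(x,y)\,q(y)\,d\mu(y)$; this already gives $\|dg\|_{C^0}\lesssim Q$. To estimate $dg(x)-dg(y)$ I fix a chart, set $\delta=|x-y|\le1$, and split the domain into the ball $B=B(x,2\delta)$ and its complement. On $B$ I bound the two gradient terms separately by the integrable kernel estimate, obtaining $\lesssim Q\int_{|z-x|\le C\delta}|z-x|^{1-2n}\,dz\lesssim Q\delta$. On the complement, where $|z-x|\ge2\delta\ge2|x-y|$, I apply the mean value inequality in the first slot with the second-derivative bound, $|\nabla_x N(x,z)-\nabla_x N(y,z)|\lesssim\delta\,|z-x|^{-2n}$, and integrate in polar coordinates to get $\lesssim Q\delta\int_{2\delta}^{R}\rho^{-1}\,d\rho\lesssim Q\delta\log(1/\delta)$, where $R$ bounds the chart diameter.

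Adding the two contributions yields $|dg(x)-dg(y)|\lesssim Q\big(\delta+\delta\log(1/\delta)\big)\lesssim Q\,\delta\log(1/\delta)$ for $\delta\le1$, which is precisely the asserted inequality with $C$ depending only on $(M,g)$ and $Q$.

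I expect the main obstacle to be the borderline character of the estimate. The exponent $1-2n$ sits exactly at the edge of integrability, so the far-field integral $\int\rho^{-1}\,d\rho$ diverges logarithmically; this is the structural reason one cannot upgrade $dg$ to genuinely Lipschitz (the Schauder estimate $\triangle g\in C^{0,\alpha}\Rightarrow g\in C^{2,\alpha}$ fails at $\alpha=0$), so the task is to track this divergence honestly rather than discard it. Secondary care is required to (i) establish the kernel bounds on the manifold via a parametrix and fold the smoother remainder into the Lipschitz part, and (ii) justify differentiation under the integral at the critical integrability of $\nabla_x N$, where the continuity of $q$---not merely an $L^\infty$ bound---makes the representation formula and the resulting $C^1$ regularity of $g$ rigorous.
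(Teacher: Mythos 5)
Your argument is correct: the paper does not actually prove this lemma but simply cites Kato (Lemma 1.2 of \cite{Kato}) and Morrey (Theorem 2.5.1 of \cite{Morrey}), and your Green's-function representation with the near-field/far-field splitting --- integrable kernel bound $|\nabla_x N|\lesssim |x-y|^{1-2n}$ on $B(x,2\delta)$, mean-value estimate with $|\nabla_x^2 N|\lesssim |x-y|^{-2n}$ off it, giving $Q\delta$ plus $Q\delta\log(1/\delta)$ --- is precisely the classical potential-theoretic proof behind those citations, correctly transplanted to the closed manifold via a parametrix (and the zero-mean compatibility and constant-ambiguity points are handled properly). One cosmetic repair: since $\log(1/\delta)\to 0$ as $\delta\to 1$, the final absorption $\delta+\delta\log(1/\delta)\lesssim\delta\log(1/\delta)$ fails near $\delta=1$, so either restrict to $\left|x-y\right|\le 1/2$ or state the bound as $C\delta\left(1+\log(1/\delta)\right)$, which is all that is needed where the lemma is applied.
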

\begin{proof}
Kato \cite{Kato} Lemma 1.2 or Morrey \cite{Morrey} Theorem 2.5.1
\end{proof}
From Lemma \ref{lemma2} it must be that $df(t)$ is quasi-Lipschitz uniformly
in $t$ aswell.
\begin{Lem} \label{lemma3}
Suppose $v$ is uniformly quasi-Lipschitz on $\left(-\epsilon,\epsilon\right)$
with flow $\eta(t)$. Then, working in charts, we can find positive
constants $C$ and $\beta$ such that
\begin{equation}
\left|\eta(t)(x)-\eta(t)(y)\right|\leq C\left|x-y\right|^{\beta},
\end{equation}
for $x$ and $y$ in the same chart and for all $t\in\left(-\epsilon,\epsilon\right)$.
\end{Lem}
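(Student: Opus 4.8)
The plan is to derive an Osgood-type differential inequality for the distance between two trajectories and integrate it. Working in a fixed chart, fix $x\neq y$ in the same chart and set $r(t)=|\eta(t)(x)-\eta(t)(y)|$, which is positive at $t=0$. Since $\eta(t)$ is the flow of $v$, i.e. $\partial_t\eta(t)=v(t,\eta(t))$, differentiating and invoking the uniform quasi-Lipschitz hypothesis gives, wherever $0<r(t)\le 1$,
$$\frac{d}{dt}r(t)\le\bigl|v(t,\eta(t)(x))-v(t,\eta(t)(y))\bigr|\le C\,r(t)\log\frac{1}{r(t)}.$$
Because the estimate is local and only needed for $|x-y|$ small, I would first shrink the chart (or restrict $|x-y|$ below a threshold) so that over the finite interval $(-\epsilon,\epsilon)$ the trajectories $\eta(t)(x),\eta(t)(y)$ stay in one chart with $r(t)\le 1$; continuity of the flow together with the a priori bound obtained below makes this a standard continuation argument.

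Next I would integrate the inequality via the substitution $\phi(t)=\log(1/r(t))$, which is well defined and nonnegative while $0<r(t)\le 1$. The displayed inequality becomes the linear differential inequality
$$\dot\phi(t)=-\frac{\dot r(t)}{r(t)}\ge -C\,\phi(t),$$
so multiplying by the integrating factor $e^{Ct}$ gives $\frac{d}{dt}\bigl(\phi(t)e^{Ct}\bigr)\ge 0$, whence $\phi(t)\ge e^{-Ct}\phi(0)$ for $t\in[0,\epsilon)$; exponentiating yields
$$r(t)\le|x-y|^{e^{-Ct}}\le|x-y|^{e^{-C\epsilon}},$$
the last step using $|x-y|\le 1$ and $e^{-Ct}\ge e^{-C\epsilon}$ on $[0,\epsilon)$. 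For negative times the backward flow is generated by $-v$, again uniformly quasi-Lipschitz with the same constant, so the identical argument applies. Taking $\beta=e^{-C\epsilon}$ and absorbing the chart-transition constants into $C$ produces the claimed Hölder estimate, uniform in $t\in(-\epsilon,\epsilon)$.

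The main obstacle is that the modulus of continuity $\rho\mapsto C\rho\log(1/\rho)$ is not Lipschitz at $\rho=0$, so the usual ODE comparison theorem does not apply directly, and one must ensure that $r(t)$ never reaches $0$ and that the inequality for $\phi$ persists. I would circumvent this by working with $\phi=\log(1/r)$ rather than with $r$ itself: small $r$ only makes $\phi$ large, which is the benign direction, and the Osgood condition $\int_{0}\frac{d\rho}{\rho\log(1/\rho)}=\infty$ guarantees that distinct trajectories cannot coalesce in finite time, keeping $r(t)>0$. Checking that $r(t)$ remains below $1$ and inside a single chart on all of $(-\epsilon,\epsilon)$ is then a routine bootstrap once the a priori Hölder bound is available.
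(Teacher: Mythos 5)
Your proof is correct, and it coincides with the paper's own "proof," which is simply a citation to Kato's 1967 paper (section 2.5), where precisely this Osgood-type argument — differentiating $r(t)=|\eta(t)(x)-\eta(t)(y)|$, substituting $\phi=\log(1/r)$, and applying Gr\"onwall to get the H\"older exponent $\beta=e^{-C\epsilon}$ — is carried out. Your handling of the technical points (positivity of $r$ via the Osgood condition, confinement to a chart by a continuation argument, time reversal for $t<0$) matches the standard treatment, so there is nothing to correct.
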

\begin{proof}
Kato \cite{Kato}, section 2.5
\end{proof}
By Lemma \ref{lemma3} and \eqref{eq:continuousholder} it follows that $\triangle(1+\triangle)f(t)\in C^{0,\alpha'}$,
for a possibly smaller $\alpha'$, while standard elliptic estimates
give $f(t)\in C^{4,\alpha'}$. This completes the proof of Theorem 1.1.

%
%
%
%
%
\section{Fredholm Properties of the $H^1$ Exponential Map}
To begin this section we collect a few well known facts about Fredholm mappings.

Let $\eta$ be the $H^1$ geodesic in $\mathcal{D}^{s}_{\omega}$ emanating from the identity in the direction $v_{o} \in T_{e} \mathcal{D}^{s}_{\omega}$. In order to study conjugate points along $\eta$ it is convenient to express the derivative of the $H^1$ exponential map, at $tv_o$, in terms of solutions to the Jacobi equation

$$\bar{\nabla}^{1}_{\dot{\eta}(t)}\bar{\nabla}^{1}_{\dot{\eta}(t)}J(t) + \bar{R}^{1}(J(t),\dot{\eta}(t))\dot{\eta}(t)=0$$

along $\eta(t)=\exp_e(tv_o)$, with initial conditions $J(0)=0$, $J'(0)=w_o$. Here, $\bar{\nabla}^{1}$ is the right-invariant $H^1$ Levi-Civita connection on $\mathcal{D}^{s}_{\omega}$  and $\bar{R}^1$ the right-invariant $H^1$ curvature tensor on $\mathcal{D}^{s}_{\omega}$ given by

$$\bar{R}^{1}(X,Y,Z) = \bar{\nabla}^{1}_{X}\bar{\nabla}^{1}_{Y} Z - \bar{\nabla}^{1}_{Y}\bar{\nabla}^{1}_{X} Z - \bar{\nabla}^{1}_{[X,Y]} Z$$

for $X=u\circ \eta$, $Y=v\circ \eta$, $Z=w\circ \eta$, with $u$, $v$, $w \in T_{e}\mathcal{D}^{s}_{\omega}$.

\begin{Pro}
Let $M$ be a compact, $2n$ dimensional, Riemannian manifold with compatible symplectic form $\omega$. Then, for $s>n+3$ and $\eta \in \mathcal{D}^{s}_{\omega}$ the curvature tensor $\bar{R}^{1}_{\eta}$ is a bounded multilinear operator in the $H^s$ topology.
\end{Pro}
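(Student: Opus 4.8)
The plan is to use right-invariance to move the whole question to the identity, to express the curvature purely algebraically through $\mathrm{ad}$ and $\mathrm{ad}^{*}$, and then to show by a symbol count that the derivative losses which are present term-by-term cancel in the full expression.

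First I would use that the $H^1$ metric is right-invariant, hence so are its (smooth, by the previous section) Levi--Civita connection $\bar\nabla^{1}$ and its curvature $\bar R^{1}$: the Koszul conditions are preserved by the isometries $R_\eta$, so $\bar\nabla^{1}_{X_u}X_v$ is the right-invariant field generated by a bilinear operator $\Gamma(u,v)\in T_e\mathcal{D}^s_\omega$, and for $X=u\circ\eta$, $Y=v\circ\eta$, $Z=w\circ\eta$ one has
\[
\bar R^{1}_\eta(X,Y)Z=\big(\bar R^{1}_e(u,v)w\big)\circ\eta .
\]
Since $s>n+1$, composition with the fixed $H^s$ diffeomorphism $\eta$ is a bounded linear isomorphism of $T_e\mathcal{D}^s_\omega$ onto $T_\eta\mathcal{D}^s_\omega$ in the $H^s$ topology (Ebin--Marsden \cite{Ebin-Marsden}), so it suffices to bound the trilinear map $(u,v,w)\mapsto\bar R^{1}_e(u,v)w$ from $\big(T_e\mathcal{D}^s_\omega\big)^{3}$ into $T_e\mathcal{D}^s_\omega$.

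Next I would record the algebraic form of the curvature. With $\Gamma(u,v)=\tfrac12\big(\mathrm{ad}^{*}_uv+\mathrm{ad}^{*}_vu-\mathrm{ad}_uv\big)$ --- which reproduces the Euler--Arnold equation \eqref{eq:E-A} since $\Gamma(v,v)=\mathrm{ad}^{*}_vv$ --- and using $[X_u,X_v]=-X_{\mathrm{ad}_uv}$ for right-invariant fields, the standard computation gives
\[
\bar R^{1}_e(u,v)w=\Gamma\big(u,\Gamma(v,w)\big)-\Gamma\big(v,\Gamma(u,w)\big)+\Gamma\big(\mathrm{ad}_uv,w\big).
\]
Into this I would substitute $\mathrm{ad}_ab=J\nabla\omega(a,b)$ and the coadjoint formula of Lemma \ref{lemma1}, $\mathrm{ad}^{*}_ab=(1+\Delta)^{-1}P\big((1+\Delta)\Delta g\cdot Ja\big)$ for $b=J\nabla g+h$. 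The harmonic summands $h$ lie in a fixed finite-dimensional space of smooth fields and contribute only bounded (indeed smoothing) terms, so they may be set aside and the work concentrated on the potentials.

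The heart of the matter, and what I expect to be the main obstacle, is a derivative count. By Lemma \ref{lemma1}, as an operator in its second argument $\mathrm{ad}^{*}_a(\cdot)$ has order $+1$: it applies the third-order operator $(1+\Delta)\Delta$ to the potential of its argument, multiplies by $Ja$, and recovers only two orders through $(1+\Delta)^{-1}$; moreover $\mathrm{ad}_a(\cdot)$ is first order and the subscript enters $\mathrm{ad}^{*}$ only as a multiplier. Thus $\Gamma$ loses one derivative and the compositions above threaten a loss of two, whereas the Proposition asserts a loss of zero in each of $u,v,w$. I would establish the cancellation by computing the leading and subleading symbols of $w\mapsto\bar R^{1}_e(u,v)w$ for fixed $u,v$: the second-order pieces assemble into the commutator $\tfrac14\big[\mathrm{ad}^{*}_u,\mathrm{ad}^{*}_v\big]w$, whose principal symbol vanishes because the two coadjoints share the same fixed differential factor and differ only by the multipliers $Ju,Jv$; the surviving first-order terms, together with those in which a derivative falls on $u$ or $v$ (such as $\mathrm{ad}^{*}_w(\mathrm{ad}_uv)$ and $\mathrm{ad}^{*}_{\Gamma(v,w)}u$), are then reorganized and cancelled by integration by parts, again using the multiplicative role of the subscript in Lemma \ref{lemma1}. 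This is precisely the structural input demanded by the framework of Misiolek and Preston \cite{MP}, and the computation runs parallel to the $L^2$ case in \cite{Benn4}; once the two top orders are removed, the remaining order-zero operator is bounded by $\|u\|_{H^s}\|v\|_{H^s}\|w\|_{H^s}$ through the Sobolev multiplication and elliptic estimates valid for $s>n+3$, and the reduction of the first paragraph transfers the bound to $\bar R^{1}_\eta$.
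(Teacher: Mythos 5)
Your route is genuinely different from the paper's. You argue in Eulerian form at the identity, using right-invariance plus Arnold's algebraic curvature formula $\bar R^{1}_e(u,v)w=\Gamma(u,\Gamma(v,w))-\Gamma(v,\Gamma(u,w))+\Gamma(\mathrm{ad}_uv,w)$ and a symbol count for $\mathrm{ad}$ and $\mathrm{ad}^{*}$; the paper instead stays in Lagrangian coordinates: it invokes Shkoller's explicit formula for the $H^1$ covariant derivative on the full group $\mathcal{D}^{s}$ and his boundedness of the ambient curvature $R^{1}$ (Lemma 4.1 through Proposition 4.1 of \cite{Shkoller98}), and then descends to the submanifold $\mathcal{D}^{s}_{\omega}$ via the Gauss--Codazzi equations, controlling the second-fundamental-form terms with the projections $P_{\eta}$ and $Q_{\eta}$, which depend smoothly on $\eta$ and are pseudo-differential of order $0$. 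Your preliminary steps are sound: the reduction to the identity, the order count ($\mathrm{ad}^{*}_{a}$ is of order $+1$ in its second slot and merely multiplicative in the subscript, by Lemma \ref{lemma1}), and the dismissal of the finite-dimensional smooth harmonic summands are all correct, and a cancellation of the general type you describe does exist.

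However, the proposal has a genuine gap exactly at its central step. First, the second-order bookkeeping is wrong: the order-one part of $\Gamma(v,\cdot)$ is $\tfrac12(\mathrm{ad}^{*}_v-\mathrm{ad}_v)$, not $\tfrac12\,\mathrm{ad}^{*}_v$, so the top-order piece of the curvature is $\tfrac14\bigl[\mathrm{ad}^{*}_u-\mathrm{ad}_u,\,\mathrm{ad}^{*}_v-\mathrm{ad}_v\bigr]$ rather than $\tfrac14\bigl[\mathrm{ad}^{*}_u,\mathrm{ad}^{*}_v\bigr]$. This is not cosmetic: the cross commutators $[\mathrm{ad}^{*}_u,\mathrm{ad}_v]$ have, individually, nonvanishing second-order principal symbol (these are systems, and the matrix-valued principal symbols do not commute), so your stated reason for the cancellation --- ``same differential factor, different multipliers'' --- does not suffice. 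What actually kills the principal symbol is the combined rank-one structure: $\sigma(\mathrm{ad}^{*}_v-\mathrm{ad}_v)(x,\xi)$ has range spanned by $J\xi$ while the associated input functional annihilates $J\xi$ (using compatibility of $g$, $\omega$, $J$ and the rank-one symbol of $P$), so the two symbols compose to zero in either order. Second, and more seriously, removing the principal symbol only reduces the loss from two derivatives to one, whereas the Proposition asserts boundedness, i.e.\ loss zero, in every slot --- including the $u,v$ slots, where terms such as $\mathrm{ad}^{*}_w(\mathrm{ad}_uv)$ lose two derivatives in the pair $(u,v)$, since the potential of $\mathrm{ad}_uv$ is $\omega(u,v)$, which receives four derivatives from $(1+\Delta)\Delta$ against a gain of only two from $(1+\Delta)^{-1}P$. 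All of this is dispatched in a single clause (``reorganized and cancelled by integration by parts''), but that subprincipal cancellation is precisely the hard content of the statement --- the tedious computation the paper outsources to \cite{Shkoller98} and then transfers to $\mathcal{D}^{s}_{\omega}$ by Gauss--Codazzi. Until it is actually carried out (for instance along the lines of \cite{MP} or \cite{Benn4}, or by reverting to the ambient-plus-Gauss--Codazzi scheme), the proposal is a plausible program rather than a proof.
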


\begin{proof}
Shkoller \cite{Shkoller98} gave an explicit formula for the $H^1$ covariant derivative $\nabla^{1}$ on the full diffeomorphism group $\mathcal{D}^{s}$, see Theorem 3.1. The $H^1$ covariant derivative on $\mathcal{D}^{s}_{\omega}$ is then given by $\bar{\nabla}^{1}=P\circ\nabla^{1}$. To prove boundedness of the curvature tensor $\bar{R}^{1}$ one first proves boundedness of the curvature tensor $R^1$ on the full diffeomorphism group. The computations are tedious and do not differ in this situation so we refer the reader to \cite{Shkoller98} Lemma 4.1 through Proposition 4.1. To prove boundedness of the curvature tensor $\bar{R}^{1}$ we use the Gauss-Codazzi equations which relates the curvature of the submanifold to that of the ambient manifold. To estimate the additional terms in the Gauss-Codazzi equations one proceeds just as in Theorem 4.1 \cite{Shkoller98}, with the minor modification of applying the projection $P_{\eta}$ onto $T_{\eta}\mathcal{D}^{s}_{\omega}$ and $Q_{\eta}=I-P_{\eta}$, which both depend smoothly on the basepoint $\eta$ (cf. \cite{Ebin}) and define pseudo-differential operators of class $OPS^{0}_{1,0}$.
\end{proof}

As a consequence, Jacobi fields exist, are unique, and globally defined along $H^1$ geodesics in $\mathcal{D}^{s}_{\omega}$.

Define the Jacobi field solution operator $\Phi_t$ by
\begin{equation} \label{eq:dexp}
w_0 \to \Phi_t w_0
=
D\exp_{e}(tv_0)tw_0 = J(t),
\end{equation}
where $\exp_{e}$ is the globally defined Riemannian $H^1$ exponential map and $D\exp_{e}$ it's derivative.

\hskip 0.5cm

\begin{Pro} \label{prop:OLD}
Let $v_0 \in T_e\mathcal{D}_{\omega}^{s}$ and let $\eta(t)$ be the geodesic of the $H^1$ metric \eqref{eq:H1metric}
in $\mathcal{D}_{\omega}^{s}$ starting from the identity $e$ with velocity $v_0$.
Then $\Phi_t$ defined in \eqref{eq:dexp} is a family of bounded linear operators from
$T_{e}\mathcal{D}_{\omega}^{s}$ to $T_{\eta(t)}\mathcal{D}_{\omega}^{s}$.
Furthermore, if $v_0 \in T_e\mathcal{D}_{\omega}^{s+1}$ then $\Phi_t$ can be represented as
\begin{equation} \label{eq:solop}
\Phi_t = D\eta(t)\big( \Omega_t-\Gamma_t \big)
\end{equation}
where $\Omega_t$ and $\Gamma_t$ are bounded operators on $T_e\mathcal{D}_{\omega}^s$
given by
\begin{align}
&\Omega_t
=
\int_{0}^{t}\mathrm{Ad}_{\eta(\tau)^{-1}}\mathrm{Ad}_{\eta(\tau)^{-1}}^{*}\, d\tau
\label{eq:Omega} \\  \label{eq:Gamma}
&\Gamma_t
=
\int_{0}^{t}\mathrm{Ad}_{\eta(\tau)^{-1}}K_{v(\tau)}dR_{\eta^{-1}(\tau)}\Phi_\tau \, d\tau
\end{align}
and $K_v$ is a compact operator on $T_e\mathcal{D}_{\omega}^s$ given by
\begin{align} \label{eq:K}
w \to K_{v(t)} w = \mathrm{ad}_w^\ast v(t),
\quad
w \in T_e\mathcal{D}_\mu^s
\end{align}
and where $v(t)$ solves the Lagrangian averaged symplectic Euler equations \eqref{eq:ASE}.
\end{Pro}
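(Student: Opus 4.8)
The plan is to follow the scheme of Misiolek and Preston: realize $\Phi_t w_0 = J(t) = D\exp_e(tv_0)tw_0$ as the solution of the Jacobi equation and then convert that second-order equation, by right-trivializing and invoking the conservation law \eqref{eq:conservation}, into a first-order linear Volterra integral equation on the fixed space $T_e\D^s_\omega$. Boundedness of $\Phi_t$ is the easy half: by the preceding Proposition the curvature $\bar R^1_\eta$ is bounded in the $H^s$ topology and the connection is smooth (Theorem 3.2), so along $\eta(t)$ the Jacobi equation is a linear second-order ODE in $T_{\eta(t)}\D^s_\omega$ with bounded coefficient operators; standard existence, uniqueness and continuous dependence for linear ODEs in Hilbert spaces then show that $w_0\mapsto J(t)$ is a bounded operator into $T_{\eta(t)}\D^s_\omega$ for each fixed $t$.

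For the representation formula I would work with the geodesic variation $\alpha(\epsilon,t)=\exp_e\big(t(v_0+\epsilon w_0)\big)$, whose variation field is exactly $J(t)=\partial_\epsilon|_0\,\alpha$. Writing $\xi(t)=dR_{\eta^{-1}(t)}J(t)$ for the right-trivialized variation and $z(t)=\partial_\epsilon|_0 v_\epsilon(t)$ for the variation of the Eulerian velocities $v_\epsilon=\dot\alpha\circ\alpha^{-1}$, differentiating $\dot\alpha=v_\epsilon\circ\alpha$ in $\epsilon$ produces the kinematic (Lin-constraint) relation $z=\dot\xi-\mathrm{ad}_v\xi$, where I use $\mathrm{ad}_v=-\mathcal L_v$ exactly as in the proof of Proposition 3.1. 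On the other hand each $v_\epsilon$ obeys \eqref{eq:conservation}, i.e. $v_\epsilon(t)=\mathrm{Ad}^*_{\eta_\epsilon^{-1}(t)}(v_0+\epsilon w_0)$; differentiating this in $\epsilon$ and using \eqref{eq:gacoadjoint} in the $\epsilon$-variable (with right-trivialized velocity $\xi$) gives the second expression $z=-\mathrm{ad}^*_\xi v(t)+\mathrm{Ad}^*_{\eta^{-1}(t)}w_0$.

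Equating the two expressions for $z$ and recognizing $\mathrm{ad}^*_\xi v=K_v\xi$ from \eqref{eq:K} yields the first-order equation $\dot\xi=\mathrm{ad}_v\xi-K_v\xi+\mathrm{Ad}^*_{\eta^{-1}}w_0$. I would then set $\zeta(t)=\mathrm{Ad}_{\eta^{-1}(t)}\xi(t)$, so that $J=dR_{\eta(t)}\mathrm{Ad}_{\eta(t)}\zeta$, which is precisely the literal tangent-map factor $D\eta(t)$ appearing in the statement, and differentiate using \eqref{eq:gaadjoint}, namely $\tfrac{d}{dt}\mathrm{Ad}_{\eta^{-1}}=-\mathrm{Ad}_{\eta^{-1}}\mathrm{ad}_v$. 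The two $\mathrm{ad}_v$ contributions cancel and one is left with $\dot\zeta=\mathrm{Ad}_{\eta^{-1}}\mathrm{Ad}^*_{\eta^{-1}}w_0-\mathrm{Ad}_{\eta^{-1}}K_v\xi$ with $\zeta(0)=0$. Integrating from $0$ to $t$ and substituting back $\xi(\tau)=dR_{\eta^{-1}(\tau)}\Phi_\tau w_0$ gives exactly $\zeta=\Omega_t w_0-\Gamma_t w_0$ with $\Omega_t,\Gamma_t$ as in \eqref{eq:Omega} and \eqref{eq:Gamma}; this is the asserted Volterra integral equation for $\Phi_t$, and boundedness of $\Omega_t,\Gamma_t$ is immediate since $\mathrm{Ad}_{\eta^{-1}}$ and $\mathrm{Ad}^*_{\eta^{-1}}$ are bounded on $T_e\D^s_\omega$ and the integrands depend continuously on $\tau$.

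Finally I would verify that $K_v$ is compact. By Lemma \ref{lemma1}, $K_v w=\mathrm{ad}^*_w v=(1+\Delta)^{-1}P\big((1+\Delta)\Delta f\cdot Jw\big)$ where $v=J\nabla f+h$, so $K_v$ is multiplication by the fixed function $(1+\Delta)\Delta f$, followed by $J$, the order-two smoothing operator $(1+\Delta)^{-1}$ and the order-zero projection $P$. The hypothesis $v_0\in T_e\D^{s+1}_\omega$ is exactly what guarantees $f\in H^{s+2}$, so that $(1+\Delta)\Delta f\in H^{s-2}$ is an admissible Sobolev multiplier (as $s>n+3$) and $K_v$ is a well-defined bounded operator on $T_e\D^s_\omega$; compactness then follows from Rellich's theorem, since a sequence $w_n\rightharpoonup 0$ in $H^s$ converges strongly in $H^{s-1}$, multiplication by the fixed $H^{s-2}$ function sends this to a strongly convergent sequence in $H^{s-2}$, and $(1+\Delta)^{-1}$ returns it to a strongly convergent sequence in $H^s$. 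I expect the main obstacle to be the bookkeeping of the middle paragraph: reconciling the two formulas for $z$ with consistent right-trivialization conventions and tracking how the factor $D\eta(t)=dR_{\eta(t)}\mathrm{Ad}_{\eta(t)}$ absorbs the $\mathrm{ad}_v$ terms, together with pinning down the regularity at which $K_v$ both maps $T_e\D^s_\omega$ to itself and remains compact, which is precisely where $v_0\in T_e\D^{s+1}_\omega$ enters.
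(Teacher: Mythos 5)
Your proposal is correct, and for the decomposition \eqref{eq:solop} it reconstructs in full the computation that the paper merely invokes by citation: the paper's proof refers to Theorem 5.6 of \cite{MP} and proves nothing about the decomposition itself, whereas you rederive it --- the kinematic relation $z=\dot\xi-\mathrm{ad}_v\xi$, the differentiated conservation law $z=-\mathrm{ad}^*_\xi v+\mathrm{Ad}^*_{\eta^{-1}}w_0$, the substitution $\zeta=\mathrm{Ad}_{\eta^{-1}}\xi$ which cancels the $\mathrm{ad}_v$ terms via \eqref{eq:gaadjoint}, and the identification $dR_{\eta}\circ\mathrm{Ad}_{\eta}=D\eta$ --- with sign conventions consistent with the paper's $\mathrm{ad}_v=-\mathcal{L}_v$; this is precisely the Misiolek--Preston argument. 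Where you genuinely diverge is the compactness of $K_v$: the paper approximates $v\in T_e\D^{s+1}_\omega$ by smooth fields $v_k$ with $K_{v_k}\to K_v$ in the $H^s$ operator norm (citing Lemma 3.4 of \cite{Benn4}) and then, for smooth $v$, reduces compactness to the Rellich lemma for $(1+\Delta)^{-1}$ together with the ideal property of compact operators; you avoid approximation entirely by factoring $K_v$ through the compact embedding $H^s\hookrightarrow H^{s-1}$ followed by the fixed-multiplier estimate $H^{s-2}\times H^{s-1}\to H^{s-2}$, which is valid here since $s-2>n=\tfrac{1}{2}\dim M$. Both routes are sound; yours is more self-contained, while the paper's isolates the smooth case and defers the multiplier issue to an approximation lemma. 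One bookkeeping point you should make explicit rather than leave to your closing caveat: the hypothesis $v_0\in T_e\D^{s+1}_\omega$ is needed not only so that $f\in H^{s+2}$ makes $K_v$ bounded on $H^s$, but also --- as the paper's Remark 5.3 stresses --- so that $\eta(t)$ lies in $\D^{s+1}_\omega$ and the operators $\mathrm{Ad}_{\eta^{-1}}$ and $\mathrm{Ad}^*_{\eta^{-1}}$ appearing in \eqref{eq:Omega} and \eqref{eq:Gamma} are genuinely bounded on $T_e\D^s_\omega$ (the group adjoint involves $D\eta$ and loses a derivative at $H^s$ regularity); your claim that boundedness of $\Omega_t$ and $\Gamma_t$ is ``immediate'' quietly assumes exactly this.
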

\begin{proof}
The proof of the decomposition \eqref{eq:solop} may be found in \cite{MP}, Theorem 5.6.
We prove compactness of the operator $K_v$ for $v\in T_{e}\mathcal{D}^{s+1}_{\omega}$. From Lemma 4.2 we know that
$$K_{v}w = (1+\Delta)^{-1}P((1+\Delta)\Delta f Jw)$$
where $v=J\nabla f + h$ for some $H^{s+2}$ function $f$ and harmonic vector field $h$. We can approximate $v$ in the $H^{s+1}$ norm by a sequence of smooth vector fields $v_{k}$ such that $K_{v_k} \rightarrow K_v$ in the $H^s$ operator norm, cf. Lemma 3.4 \cite{Benn4}. Since a limit of compact operators is compact it suffices to prove that $K_v$ is compact when $v$ is smooth.
Observe that, since $(1+\Delta)\Delta f$ is smooth, $P((1+\Delta)\Delta f Jw)$ is continuous in the $H^s$ topology on $T_{e}\mathcal{D}^{s+1}_{\omega}$. Since $(1+\Delta)^{-1}$ is compact by the Rellich embedding Lemma, and the set of compact operators form a closed two-sided ideal, the composition $K_{v}$ is also a compact operator.
\end{proof}
\begin{Rem} \upshape
Note that the decomposition \eqref{eq:solop}-\eqref{eq:K} must be applied with care.
This is due to the loss of derivatives involved in calculating the differential of the left translation operator
$\xi \to L_\eta \xi$ and consequently of the adjoint operator in \eqref{eq:Ad}. This is why we consider $v_0$, and hence $\eta(t)$, in $H^{s+1}$ rather than $H^s$.
\end{Rem}
\begin{Rem}
To prove compactness of $K_{v}$ one may also proceed just as in Lemma 3.5 in \cite{Benn4}.
\end{Rem}

\begin{The} \label{prop:OLDFred}
Suppose $M$ is any closed, symplectic manifold of dimensions $2n$ with compatible Riemannian metric $g$. For any $v_0 \in T_e\mathcal{D}_{\omega}^s$, with $s>n+3$, the derivative $d\exp_e(tv_0)$ extends to
a Fredholm operator on the $H^1$-completions
$\overline{T_e\mathcal{D}_{\omega}}^{_{H^1}}$
and~
$\overline{T_{\exp_e(tv_0)}\mathcal{D}_\omega}^{_{H^1}}$.
\end{The}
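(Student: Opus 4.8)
The plan is to read the Fredholm property directly off the decomposition
\[
\Phi_t = D\eta(t)\big(\Omega_t-\Gamma_t\big)
\]
of Proposition \ref{prop:OLD}. Since $\Phi_t = t\,D\exp_e(tv_0)$ and $t\neq 0$, it suffices to show that $\Phi_t$ extends to a Fredholm operator of index zero between the $H^1$-completions. The strategy is to verify three structural facts on $\overline{T_e\mathcal{D}_{\omega}}^{_{H^1}}$: that $\Omega_t$ is a self-adjoint isomorphism, that $\Gamma_t$ is compact, and that $D\eta(t)$ extends to a Hilbert-space isomorphism onto $\overline{T_{\eta(t)}\mathcal{D}_{\omega}}^{_{H^1}}$. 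Granting these, I would write $\Omega_t-\Gamma_t=\Omega_t\big(I-\Omega_t^{-1}\Gamma_t\big)$; here $\Omega_t^{-1}\Gamma_t$ is compact, so $I-\Omega_t^{-1}\Gamma_t$ is a compact perturbation of the identity and hence Fredholm of index zero. Composing on the left with the isomorphisms $\Omega_t$ and $D\eta(t)$ preserves both the Fredholm property and the index, so $\Phi_t$, and therefore $d\exp_e(tv_0)$, is Fredholm of index zero, which is the assertion (and matches Theorem 1.2).

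For $\Omega_t$ I would first observe that each integrand $\mathrm{Ad}_{\eta(\tau)^{-1}}\mathrm{Ad}_{\eta(\tau)^{-1}}^{*}$ is self-adjoint and nonnegative in the $H^1$ inner product, so that $\Omega_t$ is self-adjoint and
\[
\langle\Omega_t w,w\rangle_{H^1}=\int_0^t\|\mathrm{Ad}_{\eta(\tau)^{-1}}^{*}w\|_{H^1}^2\,d\tau\geq 0.
\]
The lower bound needed for invertibility comes from the normalization $\mathrm{Ad}_{\eta(0)^{-1}}^{*}=I$: by norm-continuity of $\tau\mapsto\mathrm{Ad}_{\eta(\tau)^{-1}}^{*}$ there is $\delta>0$ on which $\|\mathrm{Ad}_{\eta(\tau)^{-1}}^{*}w\|_{H^1}\geq\tfrac12\|w\|_{H^1}$, giving $\langle\Omega_t w,w\rangle_{H^1}\gtrsim\|w\|_{H^1}^2$ for every $t>0$ (the case $t<0$ being symmetric under time reversal). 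A bounded self-adjoint operator bounded below is invertible with bounded inverse, so $\Omega_t$ is an isomorphism. That $D\eta(t)$ extends to an isomorphism between the $H^1$-completions uses only that $\eta(t)$ is an $H^s$ diffeomorphism with $s-1>n$, so multiplication by $D\eta(t)$ and composition with $\eta(t)^{\pm1}$ are bounded on $H^1$ vector fields, with inverse $D\eta(t)^{-1}$.

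The compactness of $\Gamma_t$ carries the real content. Its integrand $\mathrm{Ad}_{\eta(\tau)^{-1}}K_{v(\tau)}dR_{\eta^{-1}(\tau)}\Phi_\tau$ contains $K_{v(\tau)}w=(1+\Delta)^{-1}P\big((1+\Delta)\Delta f\,Jw\big)$, and the two-derivative smoothing of $(1+\Delta)^{-1}$ together with the Rellich lemma make $K_{v(\tau)}$ compact on $\overline{T_e\mathcal{D}_{\omega}}^{_{H^1}}$; this is exactly the mechanism of Proposition \ref{prop:OLD}, and for $s>n+3$ the multiplier $(1+\Delta)\Delta f$ is continuous, so the argument applies directly on the completion. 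Since $\mathrm{Ad}_{\eta(\tau)^{-1}}$, $dR_{\eta^{-1}(\tau)}$ and $\Phi_\tau$ are uniformly bounded for $\tau$ in the compact interval, the integrand is compact for each $\tau$ and depends norm-continuously on $\tau$; as the ideal of compact operators is closed, the Bochner integral $\Gamma_t$ is compact.

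The main obstacle is organizing the passage from $\mathcal{D}_{\omega}^s$ to the $H^1$-completions so that the decomposition \eqref{eq:solop} is legitimately available. Its derivation rests on differentiating the conservation law \eqref{eq:conservation}, and hence on the smoothness of the adjoint representation, which—as noted in the Remark following Proposition \ref{prop:OLD}—costs one derivative and forces $v_0\in T_e\mathcal{D}_{\omega}^{s+1}$. I would therefore first establish \eqref{eq:solop} and the Fredholm conclusion for $v_0\in T_e\mathcal{D}_{\omega}^{s+1}$, and then extend to arbitrary $v_0\in T_e\mathcal{D}_{\omega}^{s}$ by density: both sides of \eqref{eq:solop} are continuous in $v_0$ as bounded operators on the $H^1$-completion, and the properties ``$\Omega_t$ invertible'' and ``$\Gamma_t$ compact'' persist in the limit. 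The delicate point is to keep the lower bound for $\Omega_t^{-1}$ and the compactness estimates for $K_{v(\tau)}$ uniform along the approximating family, so that the index-zero Fredholm structure survives the passage to the $H^s$ initial data.
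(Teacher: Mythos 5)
Your proposal is correct and follows essentially the same route as the paper: the decomposition $\Phi_t = D\eta(t)(\Omega_t-\Gamma_t)$ from Proposition \ref{prop:OLD}, self-adjointness plus a coercivity bound making $\Omega_t$ invertible on the $H^1$-completion, compactness of $\Gamma_t$ inherited from $K_{v(\tau)}$ via the closedness of the compact ideal, and the conclusion that an invertible-plus-compact operator is Fredholm of index zero. Your two small deviations are harmless or even helpful: the lower bound for $\Omega_t$ via norm-continuity of $\tau\mapsto\mathrm{Ad}^*_{\eta(\tau)^{-1}}$ near $\tau=0$ replaces the paper's uniform estimate $\|\mathrm{Ad}^*_{\eta(\tau)^{-1}}w\|_{H^1}\geq \|w\|_{H^1}/\|\mathrm{Ad}^*_{\eta(\tau)}\|_{H^1}$ (both work), and your explicit density passage from $v_0\in T_e\mathcal{D}^{s+1}_\omega$ to $v_0\in T_e\mathcal{D}^{s}_\omega$ makes precise a step the paper leaves implicit at the $H^1$ stage and only carries out later in the $H^s$ argument.
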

\begin{proof}
To prove invertibility of \eqref{eq:Omega} we first note that since $\eta$ is a diffeomorphism it follows that $\mathrm{Ad}_{\eta}$
is invertible on $T_{e}\mathcal{D}_{\omega}^{1}$ with inverse $\mathrm{Ad}_{\eta}^{-1}=\mathrm{Ad}_{\eta^{-1}}$. Consequently, its $H^1$ adjoint is also invertible and satisfies $(\mathrm{Ad}^{*}_{\eta})^{-1}=\mathrm{Ad}^{*}_{\eta^{-1}}.$

Now, for any $w\in T_{e}\mathcal{D}_{\omega}^{1}$ we have
\begin{align*}
\left\langle w,\Omega(t)w\right\rangle _{H^{1}}&=\int_{0}^{t}\left\langle w,\mathrm{Ad}_{\eta^{-1}(s)}\mathrm{Ad}_{\eta^{-1}(s)}^{*}\right\rangle _{H^{1}}\,d\tau\\
&=\int_{0}^{t}\left\langle \mathrm{Ad}_{\eta^{-1}(s)}^{*}w,\mathrm{Ad}_{\eta^{-1}(s)}^{*}\right\rangle _{H^{1}}\,d\tau\\
&\geq\left(\int_{0}^{t}\frac{ds}{\left\Vert \mathrm{Ad}_{\eta(s)}^{*}\right\Vert _{H^{1}}}\right)\left\Vert w\right\Vert _{H^{1}}^{2}
\end{align*}
which implies that $\Omega(t)$ is bounded from below (and self-adjoint)
and hence invertible on $T_{e}\mathcal{D}_{\omega}^{1}$.
Compactness of the operator \eqref{eq:Gamma} on $\overline{T_e\mathcal{D}_{\omega}}^{_{H^1}}$ follows from compactness of the operator $K_{v}$, and hence compactness of the composition appearing under the integral in \eqref{eq:Gamma}, and finally from viewing the integral as a limit of sums of compact operators. This represents $D\exp_e(tv_0)$ as the sum of an invertible operator and compact operator which implies $D\exp_e(tv_0)$ is Fredholm of index zero.
\end{proof}
\hskip 0.5cm
We now sketch the main ideas in the proof of Fredholmness in the strong $H^s$ topology. We first assume that $\eta$, along with its initial velocity $v_o$, are $C^{\infty}$. That \eqref{eq:Gamma} is compact in the strong $H^s$ topology follows the same argument above. To obtain invertibility on $T_{e}\mathcal{D}_{\omega}^{s}$ one proves an estimate of the form
\begin{equation}
\left\Vert w\right\Vert _{H^{s}}\leq c_{1}\left\Vert \Omega(t)w\right\Vert _{H^{s}}+c_{2}\left\Vert w\right\Vert _{H^{s-1}},\label{eq:invertibilityestimate}
\end{equation}
which implies $\Omega(t)$ has closed range
and finite dimensional kernel on $T_{e}\mathcal{D}_{\omega}^{s}$.
But since $T_{e}\mathcal{D}_{\omega}^{s}$ is a subset of $T_{e}\mathcal{D}_{\omega}^{1}$
it follows that $\Omega(t)$ has empty kernel. To prove that $\Omega(t)$
has empty cokernel observe that for $u\in T_{e}\mathcal{D}_{\omega}^{s}$
there exists $w\in T_{e}\mathcal{D}_{\omega}^{1}$ such that $\Omega(t)w=u$
and the estimate (\ref{eq:invertibilityestimate}) inductively implies
that the $H^{s}$ norms of $w$ remain bounded aswell. That is, $w\in T_{e}\mathcal{D}_{\omega}^{s}$. The proof of this estimate can be obtained as in Theorem 7.1 of \cite{MP}, using the fact that $(I+\Delta)$, $(I+\Delta)^{-1}$, and $P_e$ are pseudo-differential operators of order 2, -2, and 0, respectively, and exploiting the appearance of commutators involving pseudo-differential operators. Alternatively, one may obtain this estimate following Lemma 3.2 and 3.3 of \cite{Benn4}. This then proves that $\Phi_{t}$, and hence $D\exp_{e}(tv_o)$, can be expressed as the sum of an invertible and compact operator, and is therefore Fredholm of index zero along $C^\infty$ geodesics.

To obtain Fredholmness along $H^s$ geodesics we approximate the $H^s$ initial velocity $v_o$ (and hence the $H^s$ geodesic $\eta(t)=\exp_{e}(tv_o)$) by a smooth vector field $\tilde{v}_o$ (respectively, smooth geodesic $\tilde{\eta}$). Since the exponential map is smooth in the $H^s$ topology, its derivative depends smoothly on $v_o$. In particular, it is locally Lipschitz and satisfies $\|D\mathrm{exp}_{e}(tv) - D\mathrm{exp}_{e}(t\tilde{v})\|_{L(H^s)} \leq C \|v-\tilde{v}\|_{H^s}$ uniformly on any time interval. Using \eqref{eq:invertibilityestimate} together with the decomposition \eqref{eq:solop} of the solution operator $\Phi_{t}$, we obtain the estimate
\begin{equation*}
\|w\|_{H^s} \leq \|\Phi_{t}w\|_{H^s}+\|w\|_{H^{s-1}} + \|\Gamma_{t}w\|_{H^s}.
\end{equation*}
Letting $\tilde{\Phi}_t$ and $\tilde{\Gamma}_t$ be the operators corresponding to $\tilde{v}_o$ and $\tilde{\eta}(t)$ above we obtain, for any $w \in T_{e}\mathcal{D}^{s}_{\omega}$,
\begin{align*}
\|w\|_{H^s} &
\leq \|\tilde{\Phi}_{t}w\|_{H^s} + \|w\|_{H^{s-1}} + \|\tilde{\Gamma}_{t}w\|_{H^s}\\
&\leq \|D\mathrm{exp}_{e}(tv)w\|_{H^s} + \|D\mathrm{exp}_{e}(t\tilde{v})w-D\mathrm{exp}_{e}(tv)\|_{H^s} + \|w\|_{H^{s-1}} + \|\tilde{\Gamma}_{t}w\|_{H^s}\\
&\leq \|D\mathrm{exp}_{e}(tv)w\|_{H^s}+\epsilon \|w\|_{H^s}+ \|w\|_{H^{s-1}} + \|\tilde{\Gamma}_{t}w\|_{H^s}.
\end{align*}
Choosing $\epsilon$ small enough we conclude that $D\mathrm{exp}_{e}(tv)$ has closed range and finite dimensional kernel. Thus $D\mathrm{exp}_{e}(tv)$ is semi-Fredholm and we can define its index, cf. Kato \cite{Kato66}. Since $D\mathrm{exp}_{e}(tv)$ is the identity at $t=0$, and the index is continuous on the space of Fredholm maps, $D\mathrm{exp}_{e}(tv)$ must have the same index for all $t$. Thus $D\mathrm{exp}_{e}(tv)$ is Fredholm of index zero. In particular,

%
%
%
%
\section{Stationary solutions and conjugate points on $(\D^s_\omega(\mathbb{CP}^n),\<,\>_1)$}
In this section we show that any killing vector field on $\mathbb{CP}^n$ generates a stationary solution of the averaged symplectic Euler equation \eqref{eq:ASE} and such a geodesic contains conjugate points. However we remark that generally, killing vector fields on arbitrary  symplectic manifolds do not generate solutions for the averaged symplectic Euler equation Euler equation.
The isometry group of a symplectic manifold $(M,\omega)$ with compatible Riemannian metric $g$, denoted by $\mathrm{Iso}(M)$, consists of those diffeomorphisms which satisfy
\begin{equation*}
\eta^{*}\omega=\omega \quad \eta^{*}g=g.
\end{equation*}
The isometry group is contained in the group of symplectomorphisms as a finite dimensional Lie subgroup with Lie algebra $T_e\mathrm{Iso}(M)={v\in T_e\mathcal{D}^{s}_{\omega}:\mathcal{L}_{v}g=0}$. Elements of $T_e\mathrm{Iso}(M)$ are called Killing vector fields and satisfy
\begin{equation} \label{eq:kill}
g(\nabla_{u}v,w)=-g(u,\nabla_{w}v).
\end{equation}

Suppose $(M,\omega)$ is also an Einstein manifold; that is there exists a constant $\lambda$ such that $Ric(X,Y)=\lambda g(X,Y)$. Then
\begin{equation}
\L_rv=\textbf{Ric}v
\end{equation}
where $\L_r=-\textrm{tr}\nabla^2$ is the rough Laplacian, which is related to the Hodge Laplacian by $\L=\L_r+\textbf{Ric}$. Consequently $\L v=\L_r v+\textbf{Ric}v=2\textbf{Ric}v$ for any killing vector field and
\begin{equation*}
g(\textbf{Ric}v,X)=Ric(v,X)=\lambda g(v,X)=g(\lambda v,X).
\end{equation*}
It follows that $\textbf{Ric}v=\lambda v$ and
\begin{equation} \label{eq:Leigenfn}
\Delta v=2\lambda v
\end{equation}

\begin{Pro}\label{Pro Killing fields on Einstein Kahler manifolds}
Let $(M,\omega,g)$ be a symplectic Einstein manifoldand let $v\in T_e\D^s_\omega(M)$ be a killing vector field on $M$. Then $v$ defines a time-independent solution of the averaged symplectic Euler equation \eqref{eq:ASE} and the corresponding geodesic $\eta(t)$ consists of isometries for each $t$.
\end{Pro}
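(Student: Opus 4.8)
The plan is to treat $v$ as a \emph{time-independent} candidate, $v_t=0$, and verify that the nonlinear term in the averaged symplectic Euler equation \eqref{eq:ASE} vanishes after the $L^2$ projection $P_e$; once this is done, the associated geodesic is simply the flow of the fixed field $v$, and its isometry property is immediate.

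First I would substitute the eigenvalue relation \eqref{eq:Leigenfn}, $\Delta v = 2\lambda v$, into the stationary form of \eqref{eq:ASE}. This turns $(1+\Delta)v$ into $(1+2\lambda)v$ and $\Delta v$ into $2\lambda v$, so the bracket inside $P_e$ collapses to $(1+2\lambda)\nabla_v v + 2\lambda(\nabla v)^t v$. The crux is to recognize this as a pure gradient. Applying the Killing identity \eqref{eq:kill} with $u=w=v$ gives $\nabla_v v = -\tfrac12\nabla|v|^2$, while the definition of the metric transpose together with $g(v,\nabla_b v)=\tfrac12\partial_b|v|^2$ gives $(\nabla v)^t v = \tfrac12\nabla|v|^2$. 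Substituting these, the $\lambda$-dependent contributions cancel and the whole bracket reduces to $-\tfrac12\nabla|v|^2$.

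It then remains to observe that $P_e$ annihilates gradients. Since every $w\in T_e\mathcal{D}^s_\omega$ satisfies $\mathcal{L}_w\omega=0$, hence $\mathcal{L}_w\mu=0$ and $\mathrm{div}\,w=0$, the divergence theorem yields $\langle\nabla\psi,w\rangle_{L^2}=\int_M w(\psi)\,d\mu=0$ for any function $\psi$; thus $P_e(\nabla|v|^2)=0$ and $v$ is a stationary solution of \eqref{eq:ASE}. I would note that the same conclusion can be reached from Lemma \ref{lemma1} by checking that $\mathrm{ad}_v^* v$ vanishes on such a $v$, which is a useful consistency check.

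Finally, because $v$ is a time-independent solution, the flow equation \eqref{eq:flow} shows that the geodesic $\eta(t)$ is exactly the one-parameter flow of the fixed vector field $v$. As $v$ is Killing we get $\eta(t)^*g=g$, and as $v\in T_e\mathcal{D}^s_\omega$ is symplectic we get $\eta(t)^*\omega=\omega$; hence each $\eta(t)$ lies in $\mathrm{Iso}(M)$. The only delicate point I anticipate is the cancellation in the bracket, so the main care goes into tracking signs in the Killing and transpose identities; the remaining steps are standard Hodge-theoretic and Lie-theoretic bookkeeping, and smoothness of $v$ (automatic for Killing fields) removes any regularity concern in forming $\nabla|v|^2$.
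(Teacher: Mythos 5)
Your proposal is correct and follows essentially the same route as the paper: substitute the Einstein-manifold eigenvalue relation $\Delta v=2\lambda v$ into the bracket of \eqref{eq:ASE}, use the Killing identity \eqref{eq:kill} to collapse it to the gradient $-\tfrac12\nabla|v|^2$, kill it with $P_e$, and conclude the flow consists of isometries via $\mathcal{L}_v g=0$. Your explicit verification that $P_e$ annihilates gradients (since symplectic fields are divergence-free) is a point the paper leaves implicit, but it is the same argument.
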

\begin{proof}
Let $v$ be a Killing vector field and $w$ any vector field on $M$. Then, using \eqref{eq:kill} and \eqref{eq:Leigenfn}, we compute
\begin{align*}
\langle \nabla(I+\Delta) v + (\nabla v)^{\dagger}\Delta v, w\rangle_{0} &= \langle \nabla(1+2\lambda) v + (\nabla v)^{\dagger}2\lambda v, w\rangle_{0}\\
&=\langle \nabla_{v}v,w\rangle_{0}\\
&=\langle -\frac{1}{2}\mathrm{grad} |v|^2,w\rangle_{0}
\end{align*}
Since this holds for any $w$ we have $P_{e}(\nabla(I+\Delta) v + (\nabla v)^{\dagger}\Delta v)=0$ and $v$ is a time-independent solution to the averaged symplectic Euler equations.
If $\eta(t)$ is the corresponding geodesic in $\mathcal{D}^{s}_{\omega}$ then

\begin{equation}
\frac{d}{dt}\eta(t)^{*}g=\eta(t)^{*}\mathcal{L}_{v}g=0
\end{equation}

so that $\eta(t)$ is an isometry for each $t$.
\end{proof}
%

\begin{The}
The $H^1$ metric on $\mathcal{D}^{s}_{\omega}(\mathbb{CP}^{n})$ carries conjugate points for all $n\geq 2$.
\end{The}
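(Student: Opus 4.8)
The plan is to produce the conjugate points along the stationary geodesics supplied by Proposition \ref{Pro Killing fields on Einstein Kahler manifolds}, reducing everything to the finite–dimensional geometry of the isometry group. Equip $\mathbb{CP}^{n}$ with its Fubini--Study structure, which is Kähler--Einstein with positive Einstein constant $\lambda$. Then Proposition \ref{Pro Killing fields on Einstein Kahler manifolds} applies, and every Killing field $v$ generates a time--independent solution of \eqref{eq:ASE} whose geodesic $\eta(t)=\exp_{e}(tv)$ is precisely the one--parameter subgroup of isometries generated by $v$. Writing $G=\mathrm{Iso}(\mathbb{CP}^{n})=PU(n+1)$, a compact finite--dimensional Lie subgroup of $\D^{s}_{\omega}$ whose Lie algebra consists of Killing fields, the strategy is to show that $G$ is a totally geodesic submanifold carrying a bi--invariant restricted metric, and then to exhibit conjugate points inside $G$ and transport them to $\D^{s}_{\omega}$.

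For the bi--invariance I would use \eqref{eq:Leigenfn}: on an Einstein manifold each Killing field satisfies $\Delta v=2\lambda v$, hence $(I+\Delta)v=(1+2\lambda)v$, and so for Killing fields $u,v$
\begin{equation*}
\langle u,v\rangle_{H^1}=\int_M g((I+\Delta)u,v)\,d\mu=(1+2\lambda)\langle u,v\rangle_{L^2}.
\end{equation*}
Since elements of $G$ preserve both $g$ and $\mu$, the $L^2$ product on Killing fields is invariant under $\mathrm{Ad}_{\eta}$ for $\eta\in G$; because $\lambda>0$ this identifies $\langle\,,\,\rangle_{H^1}|_{G}$ as a positive multiple of an $\mathrm{Ad}_{G}$--invariant inner product. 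A right--invariant metric whose value at $e$ is $\mathrm{Ad}_{G}$--invariant is bi--invariant, so $(G,\langle\,,\,\rangle_{H^1})$ is a compact group with bi--invariant metric. That $G$ is totally geodesic follows again from Proposition \ref{Pro Killing fields on Einstein Kahler manifolds}: the geodesic through $e$ in any Killing direction remains in $G$, and since right translation by elements of $G$ is an $H^1$ isometry permuting $G$ transitively, the same holds at every point; thus the second fundamental form of $G$ vanishes.

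With these two facts the conjugate points come from classical finite--dimensional theory. For a bi--invariant metric the Riemannian and Lie exponentials coincide, the sectional curvature equals $\tfrac14\|[X,Y]\|^2/(\|X\|^2\|Y\|^2-\langle X,Y\rangle^2)\ge 0$, and along $\eta(t)=\exp(tX)$ the curvature operator in the left--trivialized parallel frame is the constant, symmetric, positive semidefinite operator $R_X=-\tfrac14\,\mathrm{ad}_X^2$. Choosing $X$ to be a non--central Killing field --- available since $\mathfrak{su}(n+1)$ is non--abelian for $n\ge 2$, e.g. a diagonal element of $\mathfrak u(n+1)$ with distinct entries, for which $\mathrm{ad}_X$ is computed directly --- the skew operator $\mathrm{ad}_X$ has a nonzero eigenvalue $\pm i\beta$, so $R_X$ has eigenvalue $\tfrac14\beta^2>0$. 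As a compact group with bi--invariant metric is a globally symmetric space, the Jacobi equation $J''+R_XJ=0$ decouples in the eigenspaces of $R_X$, and the eigenvalue $\tfrac14\beta^2$ produces a nontrivial Jacobi field vanishing at $t=0$ and at $t_0=2\pi/\beta$. Hence $e$ and $\eta(t_0)$ are conjugate in $G$.

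It remains to promote these to conjugate points of $\D^{s}_{\omega}$, and this is where totally geodesicity is essential. For a totally geodesic submanifold the Gauss and Codazzi equations give $\bar{R}^{1}(J,\dot{\eta})\dot{\eta}=R^{G}(J,\dot{\eta})\dot{\eta}$ and $\bar{\nabla}^{1}_{\dot{\eta}}J=\nabla^{G}_{\dot{\eta}}J$ whenever $J,\dot{\eta}$ are tangent to $G$, so every intrinsic Jacobi field of $G$ solves the ambient Jacobi equation. The field found above, which vanishes at $e$ and at $\eta(t_0)$ with $J'(0)\neq 0$, is therefore a genuine $\D^{s}_{\omega}$--Jacobi field; since $\Phi_{t_0}J'(0)=J(t_0)=0$, the derivative $D\exp_{e}(t_0X)$ fails to be injective, and by the Fredholm property established earlier mono-- and epiconjugacy coincide, so $\eta(t_0)$ is a true conjugate point. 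I expect the main obstacle to be precisely the rigorous justification of this last reduction for the weak $H^1$ metric on the infinite--dimensional group: verifying that the Gauss--Codazzi formalism is valid here (which I would underpin with the smoothness of $\bar{\nabla}^{1}$ from Theorem 3.3 and the boundedness of $\bar{R}^{1}$ from Section 5) and confirming that the relevant eigenvalue $\beta$ stays nonzero for all $n\ge 2$; the existence of even one nonzero tangential Jacobi field already forces monoconjugacy, so no normal Jacobi fields can interfere.
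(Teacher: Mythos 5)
Your proposal is correct in substance and arrives at the same conjugate points as the paper, but by a genuinely more structural route. The paper shares your starting point (Proposition \ref{Pro Killing fields on Einstein Kahler manifolds}) but never invokes bi-invariance, total geodesy, or Gauss--Codazzi: it simply writes down an explicit two-parameter family $\gamma(s,t)=A(s)B(t)A(s)^{-1}$ of block matrices in $PU(n+1)$ with $\gamma(s,0)\equiv e$, checks that for each fixed $s$ the velocity $\dot{\gamma}\circ\gamma^{-1}$ is a time-independent Killing field --- so that each curve $t\mapsto\gamma(s,t)$ is an \emph{ambient} geodesic of $\mathcal{D}^{s}_{\omega}$ by Proposition \ref{Pro Killing fields on Einstein Kahler manifolds} --- and then differentiates in $s$ to exhibit an explicit Jacobi field with $J(0)=J(2\pi)=0$. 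Since the paper's $B(t)$ agrees with $\exp(tX)$ modulo the center, its variation is exactly $\exp\bigl(t\,\mathrm{Ad}_{A(s)}X\bigr)$, i.e.\ the conjugation variation your symmetric-space analysis predicts for an eigenvector of $\mathrm{ad}_X$. Your route buys more generality: conjugate points along every nontrivial Killing geodesic, at every time $2\pi/\beta$ with $i\beta$ in the spectrum of $\mathrm{ad}_X$, and the argument works verbatim for $n=1$ and on any closed symplectic Einstein manifold with nonabelian isometry group; the paper's computation buys elementarity and self-containedness. The one step you rightly flag as delicate --- Gauss--Codazzi for the weak $H^1$ metric --- is manageable but avoidable: since $G=PU(n+1)$ is finite-dimensional, $T_{\eta}G$ always splits off $H^1$-orthogonally and the formalism goes through using the smooth connection of Theorem 3.2; better still, you can bypass it entirely by observing that intrinsic geodesics of $G$ are already ambient geodesics (Proposition \ref{Pro Killing fields on Einstein Kahler manifolds} together with right-invariance), so your intrinsic Jacobi field with $J(0)=0$ is the variation field of the family $\exp_{e}(tu(s))$ of ambient geodesics with $u(0)=X$ and $u'(0)=J'(0)$, whence $J(t)=D\exp_{e}(tX)\bigl(tJ'(0)\bigr)$ directly; this is precisely the mechanism of the paper's explicit variation, and it also makes your closing appeal to Fredholmness unnecessary, since the vanishing $J(t_0)=0$ with $t_0J'(0)\neq 0$ is by itself the failure of injectivity defining monoconjugacy.
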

\begin{proof}
The manifold $M=\mathbb{CP}^n$ is an Einstein K\"{a}hler manifold with the Fubini-Study metric, which is given in components as
\begin{equation*}
h_{i\bar{j}}=h(\partial_i,\bar{\partial}_j)=\frac{  (1+|z|^2)\delta_{i\bar{j}} -\bar{z}_i z_j  }{(1+|z|^2)}
\end{equation*}
where $z=(z_1,\dots,z_n$ is a point in $\mathbb{CP}^n$ and $|z|^2=|z_1|^2+\dots+|z_n|^2$. The isometry group of $\mathbb{CP}^{n}$ is $PU(n+1)$, the projective unitary group, which is the quotient of the unitary group by its center, embedded as scalars. In terms of matrices, the unitary group consists of $(n+1)\times(n+1)$ complex matrices whose center consists of elements of the form $e^{i\theta}I$ so that $\mathrm{Iso}(\mathbb{CP}^n)$ consists of equivalence classes of matrices where two matrices $A$ and $B$ are equivalent if $A=e^{i\theta}I\times B$.

If $n$ is even, consider the following 2-parameter variation of isometries
\begin{equation*}
\gamma(s,t)=A(s)B(t)A(s)^{-1},
\end{equation*}

where
$$A(s)=\left(\begin{array}{ccccc} i &  &   &  & \\
 & \mathbb{A}(s) &   &  0& \\
 &  & \mathbb{A}(s)  &  &\\
 & 0 &   & \ddots &\\
 &  &   &  & \mathbb{A}(s)\\
\end{array}\right),
B(t)=\left(\begin{array}{ccccc} \mathbb{B}(t) &  &   &  & \\
 & \mathbb{B}(t) &   &  0& \\
 &  & \ddots  &  &\\
 & 0 &   & \mathbb{B}(t) &\\
 &  &   &  & i\\
\end{array}\right)$$
and  $\mathbb{A}(s)=\left(\begin{array}{cc}  i\textrm{cos s}  & \textrm{sin s}\\ \textrm{sin s} & i\textrm{cos s}\end{array}\right)$
and $\mathbb{B}(t)=\left(\begin{array}{cc}  i\textrm{cos t}  & \textrm{sin t}\\ \textrm{sin t} & i\textrm{cos t}\end{array}\right)$ are $2\times 2$ block matrices. Observe that $\gamma(s,0)=iI=\equiv I$ for all $s$.
With composition as matrix multiplication and inversion as matrix inversion, a direct calculation shows that the matrix associated with the vector field $v(s,t)=\dot{\gamma}(s,t)\circ\gamma^{-1}(s,t)$ is skew-hermitian and therefore lies in the Lie algebra to $PU(n+1)$. Thus $v$ is a Killing vector field and defines a time-independent solution to the averaged symplectic Euler equations, by Proposition 6.1. Hence $\gamma(s,t)$ is a geodesic for each $s$. The variation field is given by
\begin{align*}
 J(t)&=\frac{d}{ds}\gamma(s,t)|_{s=0}\\
&=\left(\begin{array}{ccccccc} \mathbf{0}  & \mathbb{D}_1(t)  &   &  & & & \\
 -\mathbb{D}_1(t)& \mathbf{0} & \mathbb{D}_1(t)  &  &  & & \\
 & -\mathbb{D}_1(t) & \mathbf{0}  & \ddots  & & & \\
 &  &\ddots   & \ddots  &  \mathbb{D}_1(t) & &\\
&  &   & -\mathbb{D}_1(t)  & \mathbf{0} & \ddots  & \\
& &  &   & \ddots  & \ddots & \mathbb{D}_2(t)\\
& &  &   &  & -\mathbb{D}_2(t) & \mathbf{0}\\
\end{array}\right)
\end{align*}
where $\mathbf{0}=\left(\begin{array}{cc}  0  & 0 \\ 0 & 0\end{array}\right)$,
$\mathbb{D}_1(t)=\left(\begin{array}{cc}  -\textrm{sin t}  & 0\\ 0 & \textrm{sin t}\end{array}\right)$, and $\mathbb{D}_2(t)=\left(\begin{array}{cc}  -\textrm{sin t}  & 0\\ 0 & i(1-\textrm{cos t})\end{array}\right)$ are $2\times 2$ block matrices.
Since $J(0)=J(2\pi)=0$, $\gamma(0)$ is conjugate to $\gamma(2\pi)$.

The proof for odd $n$ uses exactly the same 2-parameter variation of curves above except we take

$$A(s)=\left(\begin{array}{cccccc} i &  &   &  & &\\
 & \mathbb{A}(s) &   &  0& &\\
 &  & \mathbb{A}(s)  &  &  & \\
 & 0 &   & \ddots &  &\\
 &  &   &  & \mathbb{A}(s)&\\
 &  &   &  &  &  i\\
\end{array}\right),
B(t)=\left(\begin{array}{cccccc} \mathbb{B}(t) &  &   &  & \\
 & \mathbb{B}(t) &   &  0& \\
 &  & \ddots  &  &\\
 & 0 &   & \mathbb{B}(t) &\\
 &  &   &  & iI_{2}\\
\end{array}\right) .$$
\end{proof}
%

%
\bigskip

\end{document}